\newtheorem{lemma}{Lemma}[section]
\newtheorem{theorem}{Theorem}[section]
\newtheorem{proposition}{Proposition}[section]
\newtheorem{conjecture}{Conjecture}[section]
\theoremstyle{definition}
\newtheorem{example}{Example}[section]
\newtheorem{remark}{Remark}[section]
\newtheorem{definition}{Definition}[section]
\newcommand{\myitem}[1]{%
\item[#1]\protected@edef\@currentlabel{#1}%
}
\newcommand\footnoteref[1]{\protected@xdef\@thefnmark{\ref{#1}}\@footnotemark}
\title{HOMFLY Polynomials of Pretzel Knots}
\author{William Qin\footnote{{wqin2008@gmail.com}, Millburn High School, Millburn, USA}}
\date{}
\newcommand\dsq{}
\def\dsq(#1:#2){
  \draw (#1,#2) -- (#1+0.5,#2) -- (#1+0.5,#2+0.5) -- (#1,#2+0.5) -- cycle;
}
\newcommand\lc{}
\def\lc(#1:#2){
  \draw (#1+0.5,#2+0)--(#1+0.1,#2+0.8);
  \draw (#1+-0.1,#2+1.2)--(#1+-0.5,#2+2);
  \draw (#1+-0.5,#2+0)--(#1+0.5,#2+2);
}
\newcommand\rc{}
\def\rc(#1:#2){
  \draw (#1-0.5,#2+0)--(#1+-0.1,#2+0.8);
  \draw (#1+0.1,#2+1.2)--(#1+0.5,#2+2);
  \draw (#1+0.5,#2+0)--(#1+-0.5,#2+2);
}
\newcommand\nc{}
\def\nc(#1:#2){
  \draw (#1+0.5,#2+0)--(#1+0.5,#2+2);
  \draw (#1-0.5,#2+0)--(#1+-0.5,#2+2);
}
\begin{document}
\maketitle

\begin{abstract}
HOMFLY polynomials are one of the major knot invariants being actively studied. They are difficult to compute in the general case but can be far more easily expressed in certain specific cases. In this paper, we examine two particular knots, as well as one more general infinite class of knots.

From our calculations, we see some apparent patterns in the polynomials for the knots $9_{35}$ and $9_{46}$, and in particular their $F$-factors. These properties are of a form that seems conducive to finding a general formula for them, which would yield a general formula for the HOMFLY polynomials of the two knots.

Motivated by these observations, we demonstrate and conjecture some properties both of the $F$-factors and HOMFLY polynomials of these knots and of the more general class that contains them, namely pretzel knots with 3 odd parameters. We make the first steps toward a matrix-less general formula for the HOMFLY polynomials of these knots.
\end{abstract}

\section{Introduction}

HOMFLY polynomials are subcategory of knot polynomial, that generalizes the Jones and Alexander polynomials. In the fundamental representation, they satisfy the skein relation $\frac{1}{A} {\cal{H}}(L_{+})-A{\cal{H}}(L_{-})=(q-q^{-1}){\cal{H}}(L_0)$.

HOMFLY polynomials can be generalized to non-fundamental representations of $SU(2)$, creating the ``colored'' HOMFLY polynomials. While the normal HOMFLY polynomials are relatively easy to compute and have been computed for hundreds of knots in repositories like \cite{katlas}, colored HOMFLY polynomials are difficult to compute in general for any non-(anti)symmetric representations, and computations even in the simplest such representation, namely that corresponding to the smallest L-shaped Young diagram, are difficult to do for more than a few knots.

The crossing changing formula (skein relation) of the HOMFLY polynomial in the fundamental representation, or the original HOMFLY polynomial, was initially constructed to concern variables $t$ and $\nu$\cite{lin2006hecke}. However, when expanded to other representations, the HOMFLY polynomials are quantum invariants associated with irreducible representations of the quantum group $U_{q}({sl}_{N})$, and it is then possible to express it in terms of variables $q$ and $A=q^{N}$\cite{lin2006hecke}. This is the formulation we use in this text.
    
Certain methods have been developed in the past that can compute general colored HOMFLY polynomials. However, each of these general methods relies on the use of matrix multiplication. The most general such method, using quantum R-matrices and braid diagrams is not feasible, despite working for all knots and all representations. This is because it involves the multiplication of square matrices with dimensions increasing as the product of the size of the representation and of the number of strands. The matrix entries are sufficiently complex that this multiplication takes on the order of minutes even for very small representations and knots, and increases very quickly from there. Even less general methods which rely on matrices, despite having smaller matrices, become infeasible at relatively low representation size, and often work only for (anti)symmetric representations or rectangular representations. Therefore, it is desirable to find some method of computing HOMFLY polynomials that does not involve matrix multiplication so that it is feasible to compare knots using these polynomials, even if only for specific knots or small classes of knots.
    
Certain classes of knots such as torus and twist knots already have short general formulas for (anti)symmetric representations and even for rectangular representations but a general, simple formula for HOMFLY polynomials in all representations is currently out of reach for most if not all knots. However, there are easier methods for computing the HOMFLY polynomial of a certain class of knot called an aborescent knot, which allow us to compute many colored knot polynomials for these knots. Therefore, in this paper we examine the two smallest aborescent knots that are neither twist nor torus, and attempt to find a formula for their HOMFLY polynomials. Both of these knots, $9_{35}$ and $9_{46}$ in the Rolfsen table \cite{katlas}, are part of a more general class of knot called a pretzel knot, which we define later in this paper. In particular, they are both pretzel knots with 3 odd parameters, which are $(3,3,3)$ and $(3,3,-3)$ respectively.

One of the new methods, the techniques of which we use heavily in this paper, is the Racah matrix approach, which we use in this text to derive the non-fundamental knot polynomials of these two knots, and we use the general formula for pretzel knots, derived by this same approach, in the latter half of the paper.

We do not yet know a general formula for the HOMFLY polynomials of these knots that does not use matrices, which as mentioned before are slow to the point of computational infeasibility, but we begin to make progress towards such an explicit formula at least for symmetric and anti-symmetric representations, particularly using the differential expansion of \cite{defect}.

\section{Definitions}

\subsection{Representations and symmetric functions}

We use the standard notation $$\{x\}=x-\frac{1}{x},$$ $$D_j=\frac{\{A q^j\}}{\{q\}},$$ and $$[n]=\frac{\{q^n\}}{\{q\}} = q^{n-1}+q^{n-3}+...+q^{1-n},$$ 
 which is equal to the character of the $n$-dimensional representation of $SL(2)$.

We also use the intuitive notation $$[n]! = \prod_{i=1}^{n}[i].$$

Representations of $GL(N)$ are parametrized by Young diagrams with at most $n$ rows. Their characters are symmetric functions in $x_1,...,x_N$, which are called Schur functions $s_\lambda(x_1,...,x_N)$.  We denote these Young diagrams as [$c_1$,$c_1$,$\cdots$,$c_n$], where the $c_i$ are the number of boxes in the $i^{\text{th}}$ column.

Schur functions can be computed by the following determinantal identity:
\[
s_\lambda=\det_{i,j=1}^{n}(h_{\lambda_i-i+j}),
\]
where $h_n = s_{[n]}$ can be computed using the identity
\[
\sum_n h_n t^n = e^{\sum_n \frac{p_n}{n} t^n}.
\]
It is often useful, in practice, by way of the Taylor series of $e^x$, to express them more explicitly as 

\[
h_n=\sum_{x_1+2x_2+3x_3+\cdots = n}\frac{p_1^{x_1}}{1^{x_1}(x_1!)}\frac{p_2^{x_2}}{2^{x_2}(x_2!)}\frac{p_3^{x_3}}{3^{x_3}(x_3!)}\cdots.
\]
Here $p_n$ are the power-sum polynomials
\[
p_k = \sum_i x_i^k.
\]
In order to consider stabilization in the limit $n\to\infty$, it is convenient to introduce $A$ such that
$$
A = q^N.
$$
HOMFLY polynomials are Laurent polynomials in $A$ and $q$, and the dependence on $N$ goes through this relation.

At the special point $p_i^* = \frac{\{A^i\}}{\{q^i\}},$ Schur functions have the hook-length factorization
\[
s_\lambda(p_i^*) = \prod_{(i,j) \in \lambda} \frac{\{Aq^{i-j}\}}{\{q^{\text{hook}_{ij}}\}},
\]

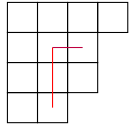
\begin{figure}[H]
    \centering
    \begin{tikzpicture}[scale=0.8]
    \dsq(1:1);
    \dsq(1.5:1);
    \dsq(2:1);
    \dsq(2.5:1);
    \dsq(1:0.5);
    \dsq(1.5:0.5);
    \dsq(2:0.5);
    \dsq(1:0);
    \dsq(1.5:0);
    \dsq(2:0);
    \dsq(1:-0.5);
    \dsq(1.5:-0.5);
    \draw[red] (1.75, -0.25) -- (1.75, 0.75);
    \draw[purple] (2.25, 0.75) -- (1.75, 0.75);
    \end{tikzpicture}
    \caption{A hook of length 4 in the Young diagram [4,4,3,1], beginning at at box (1,1)}
    \label{fig:hook}
\end{figure}

\noindent and this expression coincides with the $\lambda$-colored HOMFLY polynomial of the unknot. Throughout the paper we divide all $\lambda$-colored HOMFLY polynomials by this quantity, so the unknot HOMFLY polynomials are always equal to 1.

\subsection{Pretzel Knots}

In this text, we primarily consider pretzel knots (discussed in Section~\ref{pretzel}) with $3$ odd parameters $a,b,c$.

\begin{definition}
We denote the HOMFLY polynomial of a 3-parameter pretzel knot in some symmetric representation $[r]$ as $\mathcal{H}_r(a,b,c) = {\cal{H}}(a,b,c,r) = \chi_{[r,0]}\sum_{x=0}^{r}\frac{1}{S_{0,x}}(\overline{S}\cdot\overline{T}^{a}\cdot S)_{0,x}(\overline{S}\cdot\overline{T}^{b}\cdot S)_{0,x}(\overline{S}\cdot\overline{T}^{c}\cdot S)_{0,x}$.
\end{definition}

This formula was derived using Racah matrices in equation 22 of \cite{col} in terms of different Racah matrices, though we change conventions here and therefore modify the formula slightly to fit these new conventions.

We denote ${\cal{H}}_r(a,b,c)$ as $Q(c,r)$ for some arbitrary fixed $a,b$. In this text we typically set $a=b=1$, but when we use the notation $Q(c,r)$ our only restriction on $a,b$ is that within a formula, they are always the same constants. We may also write $Q(a,b,c,r)$, when we wish to specify $a,b$.

\begin{definition}
We recursively define the $n^{\text{th}}$ difference of a genus-2 pretzel knot's HOMFLY polynomial $Q^n(c, r) = Q^{n-1}(c+2, r) - Q^{n-1}(c, r)$ for positive integers $n$, and $Q^{0}(c,r)=Q(c,r)$. We may also write $Q^n(a,b,c,r)$, when we wish to specify $a,b$.
\end{definition}

In particular, $c$ is always odd, and the ``next'' $n^{th}$ difference, where we talk about it, is actually $Q^{n}(c+2,r)$.

We denote by $X(P(x)), \text{ for all } P(x)\in \mathbb{Z}[x]$, the factor of $P(x)$ that cannot be factored further with the highest degree. In this paper we use this only in the context of $X(Q^{n}(c,r))$.

\begin{remark}
We can alternatively define the $n^{\text{th}}$ difference as $X(Q^{n-1}(c+2, r)) - X(Q^{n-1}(c, r))$. We denote this expression $Q^{(n)}(c,r)$ in this text. When we list the computed differences, we use this definition (occasionally omitting the parentheses), however in our proofs we use the former for simplicity. Thus far it seems that the two definitions are related in that $Q^{n}(c,r)=Q^{(n)} (c,r)\prod_{i=1}^{n-1}\frac{Q^{i}(c,r)}{X(Q^{i})c,r_)}.$
\end{remark}

\subsection{HOMFLY polynomials}

When we refer to $F$-factors, we are referring to the factors in the differential expansion of the HOMFLY polynomial as used in \cite{twist}. While it is used there for twist knots, the formula
\begin{equation}
H_{[r]} = 1 + \sum_{s=1}^r  \frac{[r]!}{[s]![r-s]!}\,F_s(A|q)\,
\prod_{j=0}^{s-1} \{Aq^{r+j}\}\{Aq^{j-1}\}
\label{twir}
\end{equation}
applies more generally for defect-zero knots, which was used to derive them for the knots considered here. Both of the knots considered in Section~\ref{conj} are defect-zero and so we can find and analyze their $F$-factors given their HOMFLY polynomials, using the methods outlined in \cite{pre}. 

Also, we get from \cite{defect} that the defect $\delta^{\cal K}$ of the differential expansion depends on the degree of the Alexander polynomial in a way that allows us to compute the $F$-factors examined in Section~\ref{conj}. If we denote the Alexander polynomial by $Al^{\cal K}, then$

$$
\delta^{\cal K} = \frac{1}{2} \text{Power}_{q^2}(Al^{\cal K}) - 1.
$$

This is very convenient because many Alexander polynomials are readily available from \cite{katlas} (where we substitute $t=q^2$), and are also easily computed as $H_{[1]}^{\cal K}(A=1,q)$ from only the HOMFLY polynomial in the fundamental representation.

In particular, from this property, we can conclude that both $9_{35}$ and $9_{46}$ have defect $0$, which allows us to use Formula (~\ref{twir}).

\begin{example}
$Al^{9_{35}}=7q^2-13+\frac{7}{q^2}$, hence $\text{Power}_{q^2}(Al^{9_{35}})=2$ and $\delta^{\cal K}=0$.
\end{example}

\begin{example}
$Al^{9_{46}}=-2q^2+5-\frac{2}{q^2}$, hence $\text{Power}_{q^2}(Al^{9_{46}})=2$ and $\delta^{\cal K}=0$.
\end{example}

\begin{example}
$Al^{9_{1}}=q^8-q^6+q^4-q^2+1-\frac{1}{q^2}+\frac{1}{q^4}-\frac{1}{q^6}+\frac{1}{q^8}$, hence $\text{Power}_{q^2}(Al^{\cal K})=8$ and $\delta^{\cal K}=3$.
\end{example}

All of the pretzel knots we consider here can be trivially confirmed with the tables in \cite{defect} to be defect-zero, however whether this is true in general for genus $2$ pretzel knots is not yet known.

\section{Pretzel Knots}\label{pretzel}

Pretzel knots of genus $g$ are knots created by connecting pairs of crossing strands as in Figure~\ref{fig:pre}. We deal with genus 2 pretzel knots in this text (or equivalently, pretzel knots with 3 parameters). Each parameter (the $n_{i}$) determines the number of crossings in each pair.

\begin{figure}[H]
    \centering
    \includegraphics[scale=0.2]{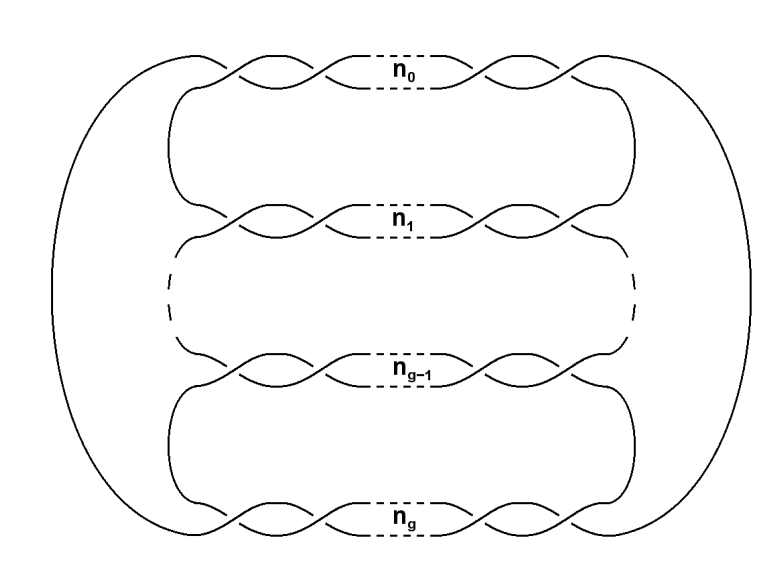}
    \caption{An illustration of a genus $g$ pretzel knot}
    \label{fig:pre}
\end{figure}

From \cite{pretzel}, we have the following two formulas about $S$ and $\bar{S}$ Racah matrices, which are used in the general formula for computing HOMFLY polynomials of genus $g$ knots.

\begin{equation}\label{eq:S}S_{km} = \sum_{j=\max(r+m,r+k)}^{\min(r+k+m,2r)}\sigma_{km}(j)\cdot \sqrt{\frac{[2m+1] \Delta_k}{[2k+1]\chi_{[r+m,r-m]}}}\cdot\frac{G(r-m)G(j+1)}{G(r+k+1)G(j-r-m)},\end{equation}
and
\begin{equation}\label{eq:sbar}\bar S_{km} = \frac{[r+1]!}{\prod_{i=0}^{r-1} D_i}\cdot\sum_{j=\max(r+m,r+k)}^{\min(r+k+m,2r)}\sigma_{km}(j)\cdot \sqrt{\frac{\Delta_k\Delta_m }{[2k+1][2m+1]}}\cdot\frac{G(r+1)G(j+1)}{G(r+k+1)G(r+m+1)G(r+k+m-j)}.\end{equation}

In these formulas,

\begin{multline}\nonumber
\sigma_{km}(j) = (-1)^{r+k+m} \sqrt{[2k+1][2m+1]} \cdot \frac{([k]![m]!)^2[r-k]![r-m]!}{[r+k+1]![r+m+1]!} \cdot\\\frac{(-1)^j[j+1]!}{[2r-j]!([j-r-k]![j-r-m]![r+k+m-j]!)^2}, \text{ for all } 0\leq k,m \leq r,
\end{multline}

$$
G(n)=\prod_{j=1}^n \frac{\{A q^{j-2}\}}{\{q^j\}},
$$

$$
\Delta_m=\frac{D_{2m-1}}{D_{-1}}G(m)^2
$$

and

$$
\chi_{[r+m,r-m]}=\frac{G(r+m+1)G(r-m)}{D_{-1}}[2m+1].
$$

We also have that \begin{equation}\label{eq:T}\bar{T}_{km}=\begin{cases}0&k\neq m\\(-q^{m-1}A)^{m}&k=m.\end{cases}\end{equation}

Before we begin, we prove a small lemma about genus-2 pretzel knots.

\begin{lemma}\label{lemma:permute}
For any genus-2 pretzel knot with parameters $a,b,c$, the knot is invariant under any permutation of the parameters.
\end{lemma}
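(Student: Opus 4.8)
The statement is a topological claim about the knot type itself, so the plan is to prove it by exhibiting an ambient isotopy (or a sequence of Reidemeister moves on a planar diagram) that realizes each transposition of two adjacent parameters, since the symmetric group is generated by such transpositions. First I would fix the standard planar diagram of the genus-2 pretzel knot as in Figure~\ref{fig:pre}: three vertical ``tangles'' of $a$, $b$, and $c$ crossings respectively, joined at top and bottom by the two horizontal arcs that close the pretzel into a single knot. The goal is to show that the diagram with tangle order $(\dots, a, b, \dots)$ is equivalent to the one with order $(\dots, b, a, \dots)$, which establishes invariance under the adjacent transposition and hence under the full symmetric group $S_3$.

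The key geometric idea is that a pretzel knot is drawn on (the boundary of) a standard genus-2 handlebody, and the three twist regions are attached in a cyclically symmetric fashion around it. Concretely, I would use the fact that the two closing arcs bound a disk region on the sphere $S^2$ (thinking of the diagram on the $2$-sphere rather than the plane), so the twist tangles are arranged around a circle with no preferred endpoints. The main step is then a ``flype''-type or rotation argument: one can slide one twist region around the closing arc past an adjacent twist region. More precisely, I would argue that rotating the entire diagram on $S^2$ cyclically permutes the three tangles, giving invariance under the $3$-cycle $(a\,b\,c)$; to get the full $S_3$ I additionally need one transposition, which I would obtain by a reflection/flip of the diagram through a vertical axis that reverses the order of the tangles to $(c,b,a)$ while preserving the knot type (a pretzel knot and its mirror-in-the-plane along this axis are isotopic because the flip can be realized by rotating the $S^2$ by $\pi$ about an in-plane axis). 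Together, a $3$-cycle and a transposition generate $S_3$, so every permutation of $(a,b,c)$ yields an ambient-isotopic knot.

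In carrying this out I would make the moves explicit: (i) realize the cyclic rotation by viewing the diagram on $S^2$ and rotating by $2\pi/3$, checking that each twist region maps to the next and that crossing signs (and hence the parameter signs) are preserved; (ii) realize the order-reversal by a rotation of $S^2$ by $\pi$ about a horizontal axis lying in the projection plane, verifying again that no crossing changes sign. I would then note that any two of these generate all of $S_3$, completing the argument. An efficient alternative I would mention is to appeal to the well-known classification of Montesinos/pretzel links, under which the pretzel link type depends only on the multiset of parameters (up to the standard relations), but since the lemma is stated for the specific genus-2 case I would prefer the self-contained isotopy argument.

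\textbf{Main obstacle.} The delicate point is bookkeeping the crossing signs under the $S^2$ rotations: I must confirm that the symmetry moves are orientation-preserving ambient isotopies of $S^3$ so that each $n_i$-crossing region is carried to an $n_i$-crossing region \emph{with the same sign} (rather than to its mirror $-n_i$), which is exactly what makes the knot type, and not merely some mirror of it, invariant under permutation. Getting this sign bookkeeping right — in particular verifying that the rotation about the in-plane axis does not invert any crossings — is where the real care is needed, and it is cleanest to track it by drawing one explicit small case and observing that the two generating moves act on the standard diagram in the claimed way.
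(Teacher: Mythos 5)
Your proposal is correct and takes essentially the same route as the paper: the paper likewise decomposes $S_3$ into a cyclic rotation of the tangles (realized there by an explicit isotopy pulling the first pair of strands over the rest of the knot, Figure~\ref{fig:rot}, rather than your rotation of the diagram on $S^2$) and an order-reversing flip, and then lists how these two moves generate every permutation. One small slip to fix: in your step (ii) the order-reversal must be the rotation by $\pi$ about the \emph{vertical} in-plane axis, as you correctly said earlier --- rotating about the horizontal in-plane axis merely turns each twist region upside down and leaves the order $(a,b,c)$ unchanged.
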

\begin{proof}
We can prove that the genus-2 pretzel knots are invariant under rotation of the parameters as well as order reversal, which will allow us to demonstrate that the knot is invariant under any permutation of the parameters.

For rotation of parameters, we use the Figure ~\ref{fig:rot}.

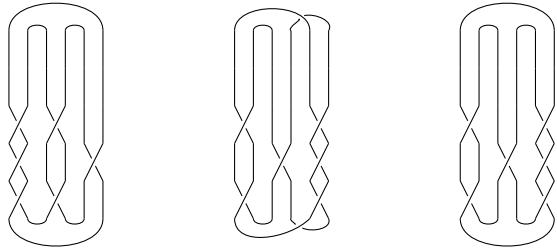
\begin{figure}[H]
    \centering
    \begin{tikzpicture}[scale=0.25]
    \lc(-1:0);
    \lc(-1:2);
    \lc(-1:4);
    \lc(1:0);
    \lc(1:4);
    \lc(3:2);
    \nc(1:2);
    \nc(3:0);
    \nc(3:4);
    \nc(3:6);
    \nc(1:6);
    \nc(-1:6);
    \nc(3:8);
    \nc(1:8);
    \nc(-1:8);
    \draw[bend right=90] (2.5,10) to (1.5,10);
    \draw[bend right=90] (0.5,10) to (-0.5,10);
    \draw[bend right=90] (3.5,10) to (-1.5,10);
    \draw[bend right=270] (2.5,0) to (1.5,0);
    \draw[bend right=270] (0.5,0) to (-0.5,0);
    \draw[bend right=270] (3.5,0) to (-1.5,0);
    
    \lc(15:0);
    \lc(15:2);
    \lc(15:4);
    \lc(11:0);
    \lc(11:4);
    \lc(13:2);
    \nc(11:2);
    \nc(13:0);
    \nc(13:4);
    \nc(13:6);
    \nc(11:6);
    \nc(15:6);
    \nc(13:8);
    \nc(11:8);
    \nc(15:8);
    \draw[bend right=90] (12.5,10) to (11.5,10);
    \draw[bend right=90] (13.9,10.6) to [out=135] (13.5,10);
    \draw[bend left=315] (15.5,10) to [out=270] (14.2,10.8);
    \draw[bend right=90] (14.5,10) to (10.5,10);
    \draw[bend right=270] (12.5,0) to (11.5,0);
    \draw[bend right=270] (14.5,0) to [out=45](10.5,0);
    \draw[bend right=270] (13.7,-0.3) to [out=225] (13.5,0);
    \draw[bend left=45] (15.5,0) to [out=90] (14.1,-0.5);
    
    \lc(23:0);
    \nc(23:2);
    \lc(23:4);
    \nc(25:0);
    \nc(25:4);
    \lc(27:2);
    \lc(25:2);
    \lc(27:0);
    \lc(27:4);
    \nc(27:6);
    \nc(25:6);
    \nc(23:6);
    \nc(27:8);
    \nc(25:8);
    \nc(23:8);
    \draw[bend right=90] (26.5,10) to (25.5,10);
    \draw[bend right=90] (24.5,10) to (23.5,10);
    \draw[bend right=90] (27.5,10) to (22.5,10);
    \draw[bend right=270] (26.5,0) to (25.5,0);
    \draw[bend right=270] (24.5,0) to (23.5,0);
    \draw[bend right=270] (27.5,0) to (22.5,0);
    \end{tikzpicture}
    \caption{Parameter rotation with the knot described by (3,2,1)}
    \label{fig:rot}
\end{figure}

Flipping an intersection does not change the intersection type, which allows us to perform the transformation shown, keeping all of the intersections of the same type, by pulling the first two strands over the rest of the knot, and then flipping the formerly first pair of strands.

To prove that flipping the order of the parameters does not change the not, we need only to flip the knot, which does not change any of the intersection types but reverses the order of the parameters.

For the sake of completeness we list the sequences of these two operations that generate each parameter permutation, with F representing a flip and R representing a rotation.

\begin{tabular}{|c|c|}\hline
    Permutation & Sequence \\\hline
    (a,b,c) & Identity \\\hline
    (a,c,b) & RF \\\hline
    (b,a,c) & RRF \\\hline
    (b,c,a) & R \\\hline
    (c,a,b) & RR \\\hline
    (c,b,a) & F \\\hline
\end{tabular}

Therefore, we can permute the parameters of a genus-2 pretzel knot and the knot will remain the same.
\end{proof}
\begin{remark}
This means also that the HOMFLY polynomial is invariant under these changes. Additionally, the components of this proof also hold for arbitrary genus but except in genus 2 do not prove the lemma.
\end{remark}
    
\section{$9_{35}$ and $9_{46}$}\label{conj}

In this section we consider the smallest defect-zero knots that are neither twist knots nor torus knots, which are $9_{35}$ and $9_{46}$. Both of these knots are pretzel knots with 3 parameters, for which an explicit F-factor formula is not yet known. $F$-factors here are parts of the differential expansion formula for HOMFLY polynomials, as described in \cite{defect}. Both of these knots are small enough to explicitly calculate some of the HOMFLY polynomials, and we are therefore able to compute some $F$-factors to make some conjectures as to their properties. We present limited examples of the conjectures in the text, and provide more $F$-factors in the ancillary files and in the GitHub repository for both knots.

\subsection{$9_{35}$}

We can use the formula for HOMFLY polynomials of pretzel knots in \cite{pre} to get the following $F$-factors. We find the $F$-factors by using Formula~\ref{twir} after computing the HOMFLY polynomials. $9_{46}$ is the pretzel knot $(3,3,3)$. We write $F_i(A\mid q)$ as $F_i$ throughout this text.

We get

$$
F_1=-A^2 (A^6+3 A^4+2 A^2+1),
$$

\begin{multline}\nonumber
F_2=A^4 q^2 (A^{12} q^{12}+3 A^{10} q^{10}+3 A^{10} q^8+5 A^8 q^8+5 A^8 q^6+ \\ 
+3 A^8 q^4+4 A^6 q^6+6 A^6 q^4+3 A^6 q^2+A^6+3 A^4 q^4+4 A^4 q^2+3 A^4+2 A^2 q^2+2 A^2+1),
\end{multline}

\begin{multline}\nonumber
F_3=-A^6 q^6 (A^{18} q^{36}+3 A^{16} q^{32}+3 A^{16} q^{30}+3 A^{16} q^{28}+5 A^{14} q^{28}+\\
8 A^{14} q^{26}+11 A^{14} q^{24}+6 A^{14} q^{22}+3 A^{14} q^{20}+7 A^{12} q^{24}+9 A^{12} q^{22}+18 A^{12} q^{20}+\\
16 A^{12} q^{18}+12 A^{12} q^{16}+3 A^{12} q^{14}+A^{12} q^{12}+6 A^{10} q^{20}+10 A^{10} q^{18}+19 A^{10} q^{16}+\\
19 A^{10} q^{14}+18 A^{10} q^{12}+9 A^{10} q^{10}+3 A^{10} q^8+5 A^8 q^{16}+8 A^8 q^{14}+17 A^8
   q^{12}+
   17 A^8 q^{10}+\\ 17 A^8 q^8+8 A^8 q^6+3 A^8 q^4+4 A^6 q^{12}+6 A^6 q^{10}+12 A^6 q^8+12 A^6 q^6+9 A^6 q^4+3 A^6 q^2+\\  A^6+
   3 A^4 q^8+4 A^4 q^6+7 A^4 q^4+4 A^4 q^2+3 A^4+2 A^2 q^4+2 A^2 q^2+2 A^2+1).
\end{multline}


For all positive integers $i$

$$
F_i' = A^2 q^{2i} F_i
$$

When fully factored, we conjecture that the $F_i'$ are such that the factor of $A$ and $q$ is the same for $F_{i-1}'$ and $F_i$, for all positive integer $i>2$. It can be verified for all integers $2\leq i\leq 7$ that $(1 + A^2 q^{2(i-1)}) \mid F_i + F'_{i-1}$. (See ancillary files) This leads us to the following hypothesis.

\begin{conjecture}
For the knot $9_{35}$, and positive integers $i>1$, $(1 + A^2 q^{2(i-1)}) \mid F_i + F'_{i-1}$.
\end{conjecture}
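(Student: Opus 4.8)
The plan is to turn the divisibility statement into a single polynomial evaluation, and then to control that evaluation through a recursion for the $F$-factors.

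\smallskip

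\emph{Reduction to an evaluation.} First I would record the monomial normalization visible in the data: each listed $F_i$ has the shape $F_i = (-1)^i A^{2i} q^{i(i-1)} g_i$ with $g_i \in \mathbb{Z}[A^2,q^2]$ having lowest term $1$ (one checks $F_1,F_2,F_3$ directly). Since $F_{i-1}' = A^2 q^{2(i-1)} F_{i-1}$, the exponent bookkeeping gives $F_{i-1}' = (-1)^{i-1} A^{2i} q^{i(i-1)} g_{i-1}$, so the two terms carry the \emph{same} unit prefactor and
\[
F_i + F_{i-1}' = (-1)^i A^{2i} q^{i(i-1)}\,(g_i - g_{i-1}).
\]
As $A^{2i}q^{i(i-1)}$ is a unit in $\mathbb{Z}[A^{\pm},q^{\pm}]$, the conjecture is equivalent to $(1+A^2q^{2(i-1)}) \mid (g_i - g_{i-1})$. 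Because $1 + A^2 q^{2(i-1)}$ is, up to the unit $q^{2(i-1)}$, monic and linear in $A^2$, divisibility by it is equivalent to vanishing of $g_i - g_{i-1}$ on the locus $A^2 = -q^{-2(i-1)}$. Hence it suffices to prove the evaluation identity
\[
g_i\big|_{A^2 = -q^{-2(i-1)}} = g_{i-1}\big|_{A^2 = -q^{-2(i-1)}}.
\]
(As a sanity check, substituting $A^2q^2=-1$ into $g_2$ collapses all fifteen monomials down to exactly $g_1 = A^6+3A^4+2A^2+1$, so the $i=2$ case already works out and confirms the reduction.)

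\smallskip

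\emph{A recursion for $F_i$.} The second step is to make $F_i$ explicit enough to evaluate. Formula~\eqref{twir} is upper-triangular in $F_1,\dots,F_r$, with the coefficient of $F_s$ in $H_{[r]}$ being $\frac{[r]!}{[s]!\,[r-s]!}\prod_{j=0}^{s-1}\{Aq^{r+j}\}\{Aq^{j-1}\}$; the diagonal coefficient is a nonzero product of $\{\cdot\}$-factors, so the system inverts and (using that $9_{35}$ is defect-zero, hence each $F_i$ is an honest Laurent polynomial) yields the exact recursion
\[
F_i = \frac{\,H_{[i]} - 1 - \displaystyle\sum_{s=1}^{i-1} \frac{[i]!}{[s]!\,[i-s]!}\, F_s \prod_{j=0}^{s-1}\{Aq^{i+j}\}\{Aq^{j-1}\}\,}{\displaystyle \prod_{j=0}^{i-1}\{Aq^{i+j}\}\{Aq^{j-1}\}}.
\]
Here $H_{[i]}$ is the $[i]$-colored HOMFLY polynomial of $(3,3,3)$, which I would expand from the Racah-matrix formula for $\mathcal{H}_i(3,3,3)$ using the diagonal eigenvalues $\bar T_{mm}=(-1)^m A^m q^{m(m-1)}$ of \eqref{eq:T} and the explicit $S,\bar S$ of \eqref{eq:S}–\eqref{eq:sbar}. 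The aim is a closed form for $g_i$, or at least a recursion expressing $g_i$ through $g_{i-1}$ and lower terms, compatible with the prefactor identity above.

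\smallskip

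\emph{Evaluation, and the main obstacle.} Finally I would substitute $A^2 q^{2(i-1)} = -1$ into the recursion. Note that none of the factors $\{Aq^{i+j}\}$, $\{Aq^{j-1}\}$ vanish on this locus, so the denominator is invertible there and the evaluation is well defined; the simplification must instead come from coincidences among the $q$-powers of $H_{[i]}$ and the eigenvalues $\bar T_{mm}$ at the moving point $A^2 = -q^{-2(i-1)}$. The \emph{main obstacle} is precisely this: producing control of $H_{[i]}$ (equivalently $g_i$) that is uniform in $i$ and letting me evaluate at the $i$-dependent special point for all $i$, rather than the finite range $i\le 7$ checked numerically. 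I expect the cleanest route is an induction on $i$ in which the recursion reduces the order-$i$ evaluation to lower orders; the delicate point is that the recursion mixes all $F_s$ with $s<i$, whereas the conjecture is a pairwise statement, so establishing that the mixed contributions telescope against $g_{i-1}$ at $A^2q^{2(i-1)}=-1$ is where the genuine difficulty lies, and is the step most likely to require a generating-function or special-point factorization identity for the $(3,3,3)$ HOMFLY polynomials.
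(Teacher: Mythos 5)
There is an important mismatch to flag first: the paper does \emph{not} prove this statement. It appears in the paper only as a conjecture, supported by explicit computation of the $F$-factors for $2\leq i\leq 7$, so there is no proof in the paper to compare yours against; your proposal has to stand on its own. Judged that way, it is a reasonable reduction followed by an honest admission that the essential step is missing, i.e.\ it is not a proof. The parts that do work: the normalization $F_i = (-1)^i A^{2i} q^{i(i-1)} g_i$ is consistent with the listed $F_1,F_2,F_3$ (though for general $i$ this is itself an unproven empirical pattern that would first have to be extracted from the Racah-matrix formula); granting it, your identity $F_i + F'_{i-1} = (-1)^i A^{2i}q^{i(i-1)}\,(g_i - g_{i-1})$ is correct, and since $1 + A^2 q^{2(i-1)}$ becomes, after setting $B=A^2$, a polynomial linear in $B$ with unit leading coefficient in $\mathbb{Z}[q^{\pm 1}]$, divisibility is indeed equivalent to the vanishing of $g_i - g_{i-1}$ at $B = -q^{-2(i-1)}$. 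Your $i=2$ sanity check is also right: substituting $A^2q^2=-1$ into $g_2$ does collapse it to $g_1$.

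The genuine gap is the one you name yourself, and it is not a technical loose end but the entire content of the conjecture: nothing in the proposal controls $g_i$ at the moving point $A^2 = -q^{-2(i-1)}$ uniformly in $i$. The recursion you write down from Formula~(\ref{twir}) is just the triangular system defining the $F$-factors --- it is exactly how the paper itself extracts $F_i$ from computed HOMFLY polynomials --- so it repackages the problem rather than attacking it: evaluating that recursion at the special point still requires closed-form knowledge of the colored polynomials $H_{[s]}$ for all $s\le i$, which is precisely what is unavailable (and what the paper identifies as the computational bottleneck), and the hoped-for telescoping of the mixed terms $s<i-1$ against $g_{i-1}$ is asserted, not derived. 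As it stands, your argument establishes the conjecture for no value of $i$ beyond what direct computation already covers, so the statement remains open; to make progress, the missing ingredient is a special-point identity for the $(3,3,3)$ pretzel HOMFLY polynomials (e.g.\ a factorization or symmetry of the Racah-matrix sum at $A^2 q^{2(i-1)}=-1$) proved uniformly in the color $i$.
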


\subsection{$9_{46}$}

$9_{46}$ is the pretzel knot $(3,3,-3)$. We get

$$
F_1=A^2 (A^2+1),
$$

$$
F_2=A^4 (A^4 q^8+A^2 q^6+A^2 q^4+1),
$$

$$
F_3=A^6 q^4 (A^2 q^4+1) (A^4 q^{16}+A^2 q^{10}+A^2 q^8-q^6+q^2+1).
$$

We see that both $F_1$ and $F_3$ have a factor of $1 + A^2 q^{2(i-1)}$. This also holds true for $F_5$ and $F_7$, giving us the following conjecture.

\begin{conjecture}
For the knot $9_{46}$, and odd positive integer $i>1$, $(1 + A^2 q^{2(i-1)}) \mid F_i$.
\end{conjecture}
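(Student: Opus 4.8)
The plan is to recast the divisibility as a single vanishing condition and then to read it off from a structured expression for $F_i$. After clearing the monomial prefactor, the data show that each $F_i$ is an honest polynomial in $A^2$ with coefficients in $\mathbb{Z}[q^{\pm}]$, and $1+A^2q^{2(i-1)}$ is linear (hence irreducible) in $A^2$. Therefore divisibility by $1+A^2q^{2(i-1)}$ in the Laurent ring is equivalent to the single evaluation
\[
F_i\big|_{A^2=-q^{-2(i-1)}}=0,
\]
and it suffices to produce a formula for $F_i$ into which $A^2q^{2(i-1)}=-1$ can be substituted.

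First I would obtain such a formula by combining two ingredients. Inverting the triangular differential expansion \eqref{twir} expresses $F_i$ explicitly in terms of the lower factors and the colored polynomials of $9_{46}$,
\[
F_i=\frac{1}{Z_{i,i}}\Bigl(H_{[i]}-1-\sum_{s=1}^{i-1}\frac{[i]!}{[s]![i-s]!}\,F_s\,Z_{i,s}\Bigr),\qquad Z_{i,s}=\prod_{j=0}^{s-1}\{Aq^{i+j}\}\{Aq^{j-1}\}.
\]
Separately, the defining formula for $\mathcal{H}_r(3,3,-3)$ is an evolution series in the third parameter: using $\bar{T}_{mm}=(-q^{m-1}A)^m$ from \eqref{eq:T} we have $(\bar{S}\,\bar{T}^{-3}\,S)_{0,x}=\sum_m \bar{S}_{0m}\,\bar{T}_{mm}^{-3}\,S_{mx}$, so $H_{[r]}=\sum_{m=0}^r \mu_m(r)\,\bar{T}_{mm}^{-3}$ with coefficients $\mu_m(r)$ that depend only on $a=b=3$ and $r$. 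Feeding this into the inversion turns $F_i$ into a single sum over $m$ in which the mirrored tangle contributes $\bar{T}_{mm}^{-3}=(-1)^m q^{-3m(m-1)}A^{-3m}$.

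The decisive step is to evaluate this sum at $A^2=-q^{-2(i-1)}$ and show it vanishes exactly when $i$ is odd. The parity of $i$ should enter through the sign $(-1)^m$ carried by $\bar{T}_{mm}^{-3}$ together with the $(-1)^{r+k+m}$ and $(-1)^j$ signs in $\sigma_{km}$ from \eqref{eq:S} and \eqref{eq:sbar}: at the special point the magnitude factors of the Racah entries should pair up under a suitable involution of the summation index, while the accompanying signs reinforce for odd $i$ to produce complete cancellation and merely rescale for even $i$ (matching the fact that the factor genuinely fails to divide $F_2$). Making this pairing precise, i.e. identifying the involution under which the summand is anti-invariant at $A^2=-q^{-2(i-1)}$, is the heart of the argument.

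I expect this cancellation to be the main obstacle. The inversion and evolution steps are mechanical, but the evaluation point is not a zero of any individual building block (one checks $\{Aq^k\}^2=-(q^{k+1-i}+q^{i-1-k})^2\neq 0$ there, and no $D_j$ vanishes), so the vanishing is a true cancellation that must exploit the hook-length and $G$-function structure of \eqref{eq:S} and \eqref{eq:sbar}; the parity bookkeeping is delicate, and the fact that the special point itself depends on $i$ obstructs a naive induction. As a guide I would run the identical computation for $9_{35}=(3,3,3)$, replacing $\bar{T}_{mm}^{-3}$ by $\bar{T}_{mm}^{3}$: there the same evaluation should give $F_i=F_{i-1}$ at the special point, which is precisely the corresponding statement for $9_{35}$ (since $F'_{i-1}=A^2q^{2(i-1)}F_{i-1}$ reduces $F_i+F'_{i-1}$ to $F_i-F_{i-1}$ when $A^2q^{2(i-1)}=-1$). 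Aligning the two nearly identical computations may in fact be the cleanest way to isolate the sign mechanism responsible for the extra odd-$i$ vanishing in $9_{46}$.
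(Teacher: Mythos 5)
The statement you are trying to prove is not proven in the paper at all: it is stated there as a conjecture, supported only by explicit computation of $F_1$, $F_3$, $F_5$, $F_7$ via the pretzel-knot Racah formula and the inversion of the differential expansion \eqref{twir}. So there is no paper proof to measure you against, and a complete argument along your lines would genuinely go beyond the paper.

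However, your proposal is a plan, not a proof, and the gap sits exactly where you say it does. The preparatory steps are fine: divisibility by $1+A^2q^{2(i-1)}$ reduces to the single evaluation $F_i\big|_{A^2=-q^{-2(i-1)}}=0$ (granting the observed but unproven fact that $F_i$ contains only even powers of $A$, and noting the leading coefficient $q^{2(i-1)}$ is a unit in $\mathbb{Z}[q^{\pm 1}]$, so evaluation controls divisibility); the inversion of the triangular system \eqref{twir} is mechanical; and the expansion of $(\bar{S}\,\bar{T}^{-3}S)_{0,x}$ via \eqref{eq:T} into powers $\bar{T}_{mm}^{-3}=(-1)^m q^{-3m(m-1)}A^{-3m}$ is correct. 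But the decisive claim --- that at $A^2=-q^{-2(i-1)}$ the resulting sum admits an involution on the summation indices under which the summand is anti-invariant precisely when $i$ is odd --- is never exhibited. No involution is defined, no matching of terms is verified, and no mechanism is identified that distinguishes odd from even $i$ beyond the observation that the answer must come out that way (since the factor fails for $F_2$). You correctly verify that no individual building block $\{Aq^k\}$, $D_j$, $G(n)$ vanishes at the special point, which confirms the cancellation is global and nontrivial --- but that observation makes the missing step harder, not dispensable. A further structural obstacle you should flag: after inverting \eqref{twir}, $F_i$ is not a single Racah sum but $H_{[i]}$ minus correction terms involving all lower $F_s$ multiplied by products $\{Aq^{i+j}\}\{Aq^{j-1}\}$; your proposed pairing must act on this full combination, and since the evaluation point moves with $i$, the lower $F_s$ are being evaluated at points unrelated to their own distinguished factors, which (as you note) blocks induction. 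The closing parallel with $9_{35}$ --- that at $A^2q^{2(i-1)}=-1$ one has $F'_{i-1}=-F_{i-1}$, so the $9_{35}$ conjecture becomes $F_i=F_{i-1}$ at the special point --- is a nice consistency check and a sensible research heuristic, but it substitutes one unproven conjecture for another rather than closing the gap.
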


After applying the transformation $A\implies A q^2$ to $F_i$, multiplying by $\frac{A^2}{q^4}$, and denoting the result $F^*_i$, we find that $q^2 (q^4 - 1) \mid \frac{F_3}{1 + A^2 q^{4}}-F^*_2,$ because $F_3-F^*_2=A^6 q^6- A^6 q^{10}$. In particular, we can also verify that for odd integers $3 \leq i \leq 7$, $q^2(q^{2(i-1)}-1) \mid \frac{F_i}{1 + A^2 q^{2(i-1)}}-F^*_{i-1}$. This leads to the following conjecture.

\begin{conjecture}
For the knot $9_{46}$, and odd $i>1$, $q^2(q^{2(i-1)}-1) \mid \frac{F_i}{1 + A^2 q^{2(i-1)}}-F^*_{i-1}$.
\end{conjecture}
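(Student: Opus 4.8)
The plan is to reduce the claimed divisibility to an explicit, $i$-indexed description of the $F$-factors of $9_{46}$, and then to verify the divisibility by substituting the roots of the two factors $q^2$ and $q^{2(i-1)}-1$. Since the statement is phrased in terms of $\frac{F_i}{1+A^2q^{2(i-1)}}$, it presupposes the preceding conjecture that $(1+A^2q^{2(i-1)})\mid F_i$ for odd $i$; I would therefore aim to establish both statements simultaneously from a single formula for $F_i$. The first and decisive task is to obtain that formula.

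To produce it, I would start from the pretzel formula of Definition~2.1 specialized to $(a,b,c)=(3,3,-3)$ and apply the evolution method: each factor $(\bar S\,\bar T^{\,c}\,S)_{0x}=\sum_m \bar S_{0m}\lambda_m^{\,c}S_{mx}$ is a sum of the eigenvalues $\lambda_m=(-q^{m-1}A)^m$ raised to the relevant power, with coefficients built from the explicit Racah matrices \eqref{eq:S}--\eqref{eq:sbar}. This gives a closed expression for $H_{[r]}$ as a finite sum over $x$ and $m$. I would then invert the differential expansion \eqref{twir}. Because \eqref{twir} is triangular, the top term of $H_{[s]}$ is exactly $F_s\prod_{j=0}^{s-1}\{Aq^{s+j}\}\{Aq^{j-1}\}$, so one has the recursion
\[
F_s=\frac{H_{[s]}-1-\sum_{t=1}^{s-1}\frac{[s]!}{[t]![s-t]!}F_t\prod_{j=0}^{t-1}\{Aq^{s+j}\}\{Aq^{j-1}\}}{\prod_{j=0}^{s-1}\{Aq^{s+j}\}\{Aq^{j-1}\}},
\]
valid because the $F_t$ are $r$-independent for this defect-zero knot. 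Resumming this recursion into a manageable closed form (or at least a clean two-term recursion in $i$) is the heart of the argument, and I expect the factor $1+A^2q^{2(i-1)}$ to emerge from the diagonal eigenvalue $\lambda_i$, since $Aq^{i-1}(Aq^{i-1}+A^{-1}q^{-(i-1)})=1+A^2q^{2(i-1)}$.

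With a formula for $F_i$ in hand, the two divisibilities become root checks. For the preceding conjecture, $F_i$ and $1+A^2q^{2(i-1)}$ are polynomials in $A^2$, and the latter is linear in $A^2$, so by the remainder theorem $(1+A^2q^{2(i-1)})\mid F_i$ is equivalent to the vanishing of $F_i$ under $A^2\mapsto -q^{-2(i-1)}$; this makes the quotient $\frac{F_i}{1+A^2q^{2(i-1)}}$ a genuine Laurent polynomial. For the present conjecture I would write $R_i:=\frac{F_i}{1+A^2q^{2(i-1)}}-F^*_{i-1}$ and verify separately that (i) $R_i$ vanishes modulo $q^{2(i-1)}-1$, i.e.\ after the specialization $q^{2(i-1)}\mapsto 1$, where $1+A^2q^{2(i-1)}\mapsto 1+A^2$ and the defining substitution $A\mapsto Aq^2$ of $F^*_{i-1}$ should match $F_i$ against $\frac{A^2}{q^4}F_{i-1}(Aq^2)$; and (ii) the lowest power of $q$ in $R_i$ is at least $2$, giving the $q^2$ factor. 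The substitution $A\mapsto Aq^2$ together with multiplication by $A^2/q^4$ should be read as a shift of the symmetric representation level (equivalently $N\mapsto N+2$), so that comparing $F_i$ with $F^*_{i-1}$ is really a one-step evolution in $i$; locating this shift inside the resummed formula is what makes step (i) tractable.

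The main obstacle is unambiguously the first step: converting the evolution sum for $H_{[r]}(3,3,-3)$ and the triangular inversion into an explicit, uniform-in-$i$ expression for $F_i$. The paper only verifies the pattern for small $i$ precisely because this closed form is not yet available; all subsequent steps are tedious but routine root substitutions once it is. A secondary difficulty is handling the $q^{2(i-1)}-1$ specialization rigorously, since $R_i$ is not naturally a polynomial in the single variable $q^{2(i-1)}$, so the vanishing must be checked at each $2(i-1)$-th root of unity, or by exhibiting the factor directly from the resummed formula, rather than by a single naive substitution.
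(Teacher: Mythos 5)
Be aware first of what the paper itself does with this statement: it is presented as a \emph{conjecture}, supported only by explicit computation of the $F$-factors of $9_{46}$ (via the pretzel formula and the triangular inversion of the differential expansion \eqref{twir}) and direct verification of the divisibility for odd $3 \le i \le 7$. There is no proof in the paper to compare against, so the only question is whether your proposal closes the gap the paper leaves open. It does not. Your own final paragraph concedes the decisive point: every later step of your plan --- the remainder-theorem check at $A^2 \mapsto -q^{-2(i-1)}$, the specialization at the $2(i-1)$-th roots of unity, the lower bound on the $q$-valuation --- is conditional on first possessing a closed, uniform-in-$i$ expression for $F_i$, and no such expression is derived (its absence is precisely why the paper stops at a conjecture). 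Resumming the evolution sum $\sum_m \bar S_{0m}\lambda_m^{\,c} S_{mx}$ through the triangular inversion of \eqref{twir} into a formula in which the factor $1+A^2q^{2(i-1)}$ and the shift $A \mapsto Aq^2$ are visible is the entire mathematical content of the problem; the proposal offers only a heuristic for where that factor ``should'' come from (the eigenvalue $\lambda_i$), not an argument.

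That said, the scaffolding around the missing step is sound and worth keeping. The recursion you write for $F_s$ from the triangularity of \eqref{twir} is exactly how such $F$-factors are extracted in practice for defect-zero knots, where the $F_s$ are independent of the representation label $r$; your observation that divisibility by $1+A^2q^{2(i-1)}$, a polynomial linear in $A^2$, reduces to vanishing under $A^2 \mapsto -q^{-2(i-1)}$ is correct and makes the quotient in the statement well defined; and your caution that divisibility by $q^{2(i-1)}-1$ cannot be checked by a single naive substitution $q^{2(i-1)} \mapsto 1$, but requires vanishing at each $2(i-1)$-th root of unity (or an exhibited factor), is a genuine subtlety that a careless attempt would get wrong. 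But as submitted, the proposal is a research program with its central step unexecuted, not a proof; it establishes nothing beyond the finite verification the paper already contains.
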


If the above two conjectures are true, this could help in deriving a formula for arbitrary $F$-factors. For all 3 conjectures in this section, the result after dividing by the stated factor seems to begin similar for all $i$ though they differ after some terms, and we suspect that for high $i$, the remaining parts of each quantity may converge to some infinite polynomial.

\section{Factorization Of General $n^{\text{th}}$ Differences}

The $n^{\text{th}}$ differences for general pretzel knots have many properties making them useful for computing HOMFLY polynomials. In particular, these differences tend to factor nicely as products of quantum numbers, and they are constructed in a similar way to the differential expansion of \cite{defect}. As at least low-valued 3-parameter pretzel knots seem to all be defect-zero, as mentioned in \ref{pretzel}, we can likely use the $n^{\text{th}}$ differences and the differential expansion formula to generate a relatively simple formula for HOMFLY polynomials in at least symmetric and anti-symmetric representations.

In this section, where applicable we fix $a,b=(1,1)$, however the first and second subsections outline some properties that hold regardless of $a,b$. In these subsections $a,b$ are arbitrary fixed constants.

Additionally, in this section we assume that $a,b,c$ are all odd, as this guarantees a knot rather than a link.



\subsection{Conjectures}

Here we give some conjectures that are not proven here and while not integral to the main result, could be helpful in the future if proven.

\begin{conjecture}\label{lemma:mono}
$\frac{Q^{1}(c,r)}{X(Q^{1}(c,r))}=\frac{Q^{1}(c+1,r)}{X(Q^{1}(c+1,r))}$.
\end{conjecture}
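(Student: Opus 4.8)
The plan is to strip away everything in the defining formula for $\mathcal H_r(a,b,c)$ that does not depend on $c$, reducing the statement to a question about a single exponential sum, and then to show that the passage $c\mapsto c+1$ changes only the top-degree irreducible factor.

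First I would insert the diagonal form of $\bar T$ from \eqref{eq:T}. Setting $t_m:=(-q^{m-1}A)^m=(-1)^m q^{m(m-1)}A^m$, the only place $c$ enters is through $(\bar S\,\bar T^{c}\,S)_{0,x}=\sum_{m=0}^{r}\bar S_{0m}\,t_m^{\,c}\,S_{mx}$, so after carrying out the $x$-summation one obtains a mode expansion
$$
Q(c,r)=\sum_{m=0}^{r}\beta_m\,t_m^{\,c},
$$
with coefficients $\beta_m=\beta_m(A,q,a,b,r)$ that are Laurent polynomials independent of $c$. Because $t_0=1$, the $m=0$ mode cancels in the first difference, giving
$$
Q^{1}(c,r)=\sum_{m=1}^{r}\beta_m(t_m^{2}-1)\,t_m^{\,c},
\qquad
Q^{1}(c+1,r)=\sum_{m=1}^{r}\beta_m(t_m^{2}-1)\,t_m\,t_m^{\,c}.
$$
Thus $Q^{1}(c,r)$ and $Q^{1}(c+1,r)$ are two combinations of the same modes $t_m^{\,c}$, differing only by the monomial weights $t_m$. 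Since each $t_m$ is a unit in $\mathbb Z[A^{\pm},q^{\pm}]$, the entire $c$-dependence sits inside units, which is the structural reason the non-monomial factorization should be nearly $c$-independent.

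Second, I would identify the common cofactor as the greatest common divisor. Writing $\gamma_m:=\beta_m(t_m^{2}-1)$ for the ($c$-independent) mode coefficients, eliminating modes pairwise by monomial combinations produces Vandermonde-type relations; already for $r=2$ one finds
$$
A\,Q^{1}(c,2)+Q^{1}(c+1,2)=A(1+q^{2}A)\,\gamma_2\,t_2^{\,c},
\qquad
q^{2}A^{2}\,Q^{1}(c,2)-Q^{1}(c+1,2)=A(1+q^{2}A)\,\gamma_1\,t_1^{\,c},
$$
where $1+q^{2}A=1-t_2/t_1$ is the cross-factor of the two modes. These relations force $\gcd\big(Q^{1}(c,r),Q^{1}(c+1,r)\big)$ to lie between $\gcd_m(\gamma_m)$ and a product of such cross-factors times $\gcd_m(\gamma_m)$; and because the $t_m^{\,c}$ are units, $\gcd_m(\gamma_m)$ is manifestly independent of $c$. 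I would then argue that this common divisor is exactly $Q^{1}(c,r)/X(Q^{1}(c,r))=Q^{1}(c+1,r)/X(Q^{1}(c+1,r))$, i.e.\ that removing it leaves in each case a single irreducible factor of maximal degree, carried by the full mode mixture.

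The hard part will be this last irreducibility claim: proving that the residual $Q^{1}(c,r)/d=\sum_{m}(\gamma_m/d)\,t_m^{\,c}$ (with $d=\gcd_m\gamma_m$) is irreducible and strictly dominates in degree every factor of $d$, and that the residuals for $c$ and $c+1$ are distinct, so neither is absorbed back into the gcd. Direct factorization is hopeless because the $\gamma_m$ are intricate products of Racah entries from \eqref{eq:S} and \eqref{eq:sbar}; instead I would control degrees through the Newton polygon in $A$, where the top mode $m=r$ (of $A$-degree $\sim rc$) dominates, to bound $\deg X$ from below and pin its leading monomial, and attempt irreducibility by specialization at roots of the cross-factors. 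A secondary obstacle is definitional: $X$ is only defined up to monomial units and in a fixed degree convention, so one must verify the elimination steps never silently drop the degree and that the maximal factor genuinely belongs to the mixed residual rather than to $d$. Granting these, the conjecture follows, since the surviving cofactor $d$ is the same $c$-independent quantity on both sides.
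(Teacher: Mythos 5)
This statement is not proven in the paper at all: it appears in the subsection ``Conjectures,'' which the author explicitly introduces with ``we give some conjectures that are not proven here.'' So there is no paper proof to compare against, and your attempt must stand on its own --- and as written it does not. Your structural observations are correct and genuinely useful: writing $Q(c,r)=\sum_{m}\beta_m t_m^c$ with $t_m=(-q^{m-1}A)^m$ is valid because the only $c$-dependence in the defining formula is through $(\bar T^c)_{mm}$, the $m=0$ mode does cancel in the first difference, and your two elimination identities for $r=2$ check out (e.g.\ $A+t_1=0$ and $A+t_2=A(1+q^2A)$ give the first one). But the argument has two genuine gaps. The decisive one is the irreducibility step, which you yourself flag and then assume: the conjecture is precisely the claim that after removing the $c$-independent common content, what remains is a \emph{single} irreducible factor of maximal degree (so that $X$ absorbs all the $c$-dependence). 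Nothing in the mode expansion forces the residual $\sum_m(\gamma_m/d)\,t_m^c$ to be irreducible, or even to have its maximal irreducible factor dominate every factor of $d$; ``granting these, the conjecture follows'' is granting the conjecture. Newton-polygon degree bounds can control the leading behavior in $A$, but degree domination of one factor is far weaker than irreducibility of the whole residual, and no mechanism for the latter is proposed.

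The second gap is that your gcd sandwich is only established for $r=2$. With $r$ modes but only the two polynomials $Q^1(c,r)$ and $Q^1(c+1,r)$, monomial combinations can eliminate a single mode, not isolate modes pairwise; isolating mode $m$ requires $r$ consecutive shifts $c,c+1,\dots,c+r-1$, which is not what the conjecture gives you. So even the preliminary claim that $\gcd\bigl(Q^1(c,r),Q^1(c+1,r)\bigr)$ is trapped between $\gcd_m(\gamma_m)$ and cross-factor multiples of it is unproven for $r>2$. A smaller but real technical point: the coefficients $\beta_m$ are built from entries of $S$ and $\bar S$ in \eqref{eq:S}--\eqref{eq:sbar}, which contain square roots, so it is not automatic that the $\gamma_m$ lie in $\mathbb{Z}[A^{\pm1},q^{\pm1}]$; until that is settled, $\gcd_m(\gamma_m)$ is not even well-defined in the ring where you need it. In short: your reduction correctly explains \emph{why} the non-maximal cofactor ought to be $c$-independent (the $c$-dependence sits in units), and this is a sensible research plan, but the statement remains what the paper says it is --- an open conjecture.
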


\begin{remark}
Conjecture~\ref{lemma:mono}, if true, can easily be extended to show that every $\frac{Q^{1}(c,r)}{X(Q^{1}(c,r))}$ is a constant with respect to $c$, by repeated application.
\end{remark}

\begin{remark}
There are simple counterexamples for higher differences; in a later section we address what occurs when we do not take the largest factor. All that results is that previous factors carry over, but it does turn out to be useful also to study what these factors are, despite corresponding less directly to $F$-factors.
\end{remark}

\subsection{General Properties}

We begin by proving the following proposition. This provides a motivation for a general result about the first differences.

\begin{proposition}
For the representation $[1]$, $(A-q)(A+q)(Aq-1)(Aq+1)\mid Q^{1}(m,1),\text{ for all } a,b,m.$
\end{proposition}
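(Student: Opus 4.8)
The plan is to treat $Q^{1}(m,1)$ as a Laurent polynomial in $A$ and $q$ and to exhibit the four claimed linear factors by checking that it vanishes at the corresponding roots $A=q$, $A=-q$, $A=q^{-1}$, $A=-q^{-1}$. Since these four values of $A$ are distinct for generic $q$ and the polynomials $A-q$, $A+q$, $Aq-1$, $Aq+1$ are pairwise coprime (non-associate) irreducibles in the Laurent ring $\mathbb{Z}[A^{\pm1},q^{\pm1}]$, vanishing at all four points forces their product to divide $Q^{1}(m,1)$. To make the root-checking transparent I would first isolate the $c$-dependence: for $r=1$ one has $\bar T=\mathrm{diag}(1,-A)$ by~\eqref{eq:T}, so with $M(c)=\bar S\,\bar T^{\,c}\,S$ the relevant entry is $M(c)_{0,x}=\bar S_{0,0}S_{0,x}+\bar S_{0,1}(-A)^{c}S_{1,x}$. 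Using $(-A)^{c+2}-(-A)^{c}=(-A)^{c}(A^{2}-1)$ and writing $M(a)_{0,x}M(b)_{0,x}$ for the $c$-independent part, the difference collapses to
\[
Q^{1}(m,1)=\chi_{[1,0]}\,(-A)^{m}(A^{2}-1)\,\bar S_{0,1}\sum_{x=0}^{1}\frac{M(a)_{0,x}\,M(b)_{0,x}\,S_{1,x}}{S_{0,x}}.
\]
Thus it suffices to show that $Q(c,1)$ is constant in $c$ at each of the four points (the prefactor $A^{2}-1$ is nonzero there, so it cannot supply the vanishing).

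The vanishings at $A=q$ and $A=q^{-1}$ I would obtain representation-theoretically rather than by grinding on the Racah matrices. Recall $A=q^{N}$; at $N=1$ the fundamental representation of the quantum group is trivial, so the reduced HOMFLY polynomial of \emph{every} knot equals $1$, and by the standard $N\mapsto -N$ duality the same holds at $N=-1$, i.e.\ at $A=q^{-1}$. Because $a,b,m$ are odd, both $P(a,b,m)$ and $P(a,b,m+2)$ are genuine knots (by Lemma~\ref{lemma:permute} together with the standing oddness assumption), so $Q(m,1)$ and $Q(m+2,1)$ each specialize to $1$ at $A=q$ and at $A=q^{-1}$. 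Hence $Q^{1}(m,1)$ vanishes at both points, giving $(A-q)\mid Q^{1}(m,1)$ and $(Aq-1)\mid Q^{1}(m,1)$.

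For the two remaining factors I would invoke the parity symmetry of the HOMFLY polynomial of a knot: in the variable $z=q-q^{-1}$ the reduced HOMFLY of a one-component link contains only even powers of $z$, so $Q(c,1)$ is invariant under $q\mapsto -q$, and therefore so is $Q^{1}(m,1)$. Applying $q\mapsto -q$ to $(A-q)\mid Q^{1}(m,1)$ turns it into $(A+q)\mid Q^{1}(m,1)$, and applying it to $(Aq-1)\mid Q^{1}(m,1)$ yields $-(Aq+1)\mid Q^{1}(m,1)$, i.e.\ $(Aq+1)\mid Q^{1}(m,1)$. Combined with the coprimality noted above, the product $(A-q)(A+q)(Aq-1)(Aq+1)$ divides $Q^{1}(m,1)$, which is the claim.

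The main obstacle is justifying the two specialization identities cleanly inside the paper's conventions: one must confirm that $Q(c,1)$ is a genuine Laurent polynomial (so that evaluation at $A=q^{\pm1}$ is legitimate and the apparent poles of the individual matrix entries $\bar S_{0,1}$, $S_{1,x}/S_{0,x}$ cancel), and that the chosen normalization really produces the value $1$ at $N=\pm1$ rather than a sign or a $c$-dependent unit. If a self-contained route is preferred, the same four vanishings can instead be verified by computing the $2\times2$ matrices $S$ and $\bar S$ at $r=1$ explicitly from~\eqref{eq:S} and~\eqref{eq:sbar}, simplifying the square roots, and evaluating the $c$-independent factor $\chi_{[1,0]}\,\bar S_{0,1}\sum_{x}M(a)_{0,x}M(b)_{0,x}S_{1,x}/S_{0,x}$ at $A=\pm q^{\pm1}$; this is routine but algebraically heavy, and the $q\mapsto -q$ symmetry again halves the labor by reducing the four checks to two.
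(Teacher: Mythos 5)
Your proposal is correct in substance but takes a genuinely different route from the paper's proof. The paper never leaves the Racah formalism: it evaluates the explicit $2\times 2$ matrices $S$, $\bar S$, $\bar T$ at the four points $A=\pm q^{\pm 1}$, finds that $S$ and $\bar S$ degenerate there to signed (anti)diagonal matrices, and concludes that $Q(c,1)$ is constant in $c$ at each of the four points, so the first difference vanishes and divisibility follows. You instead import topological facts about the reduced fundamental HOMFLY polynomial of a knot --- triviality at $N=\pm 1$ (i.e.\ $A=q^{\pm 1}$) and $q\mapsto -q$ invariance coming from $z$-evenness --- and combine them with coprimality of the four linear factors; your preliminary isolation of the $c$-dependence, $M(c+2)_{0,x}-M(c)_{0,x}=\bar S_{0,1}(-A)^{c}(A^{2}-1)S_{1,x}$, is correct and is a nice extra, close in spirit to what Conjecture~\ref{conj:main} asks for. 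What each approach buys: yours is shorter, conceptual, and the parity trick halves the casework; the paper's is self-contained and does not depend on identifying the output of the pretzel formula with the standard topologically normalized HOMFLY polynomial. That identification is the one soft spot in your argument, and you rightly flag it: the paper's own evaluation yields $\pm 1$ at $A=\pm q$ and $\mp 1$ at $A=\pm q^{-1}$, \emph{not} uniformly $1$ as the standard normalization would force (a reduced knot HOMFLY is invariant under $A\to -A$, $q\to -q$ and $q\to q^{-1}$, so all four values would equal $H(q,q)=1$); whether this discrepancy is a square-root branch artifact or a sign slip in the paper, it shows the specific values you predict are convention-sensitive. Fortunately your argument only ever uses constancy in $c$ at the four points, which is robust, so the proof goes through once polynomiality of $Q^{1}(m,1)$ and the normalization are pinned down. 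A final comparative remark: the paper's computational route is the one that scales, since Lemma~\ref{lemma:s1} extends it to all symmetric representations in Theorem~\ref{thm:main}, whereas the colored analogues of your topological inputs (triviality of $H_{[r]}$ at $A=q^{-1}$ in particular) are less standard.
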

\begin{proof}
   The proposition is equivalent to the statements that $$\begin{array}{c}Q(2m+1,r)-Q(2m-1,r)\mid_{A=q}=0, \\ Q(2m+1,r))-Q(2m-1,r))\mid_{A=-q}=0, \\ Q(2m+1,r))-Q(2m-1,r))\mid_{A=\frac{1}{q}}=0, \\ Q(2m+1,r))-Q(2m-1,r))\mid_{A=-\frac{1}{q}}=0\end{array}$$ based on our definition of $Q^{1}(m,1)$ and the fact that any polynomial is zero exactly when one if its polynomial factors are. We know that $${\cal{H}}(a,b,c,r) = Q(c,r)=\chi_{[1,0]}\sum_{x=0}^{r}\frac{1}{S_{0,x}}(\overline{S}\cdot\overline{T}^{a}\cdot S)_{0,x}(\overline{S}\cdot\overline{T}^{b}\cdot S)_{0,x}(\overline{S}\cdot\overline{T}^{c}\cdot S)_{0,x}.$$
   
   Evaluating Equation~(4) from \cite{pretzel}, we get that with representation $[1],$ $$S=\begin{pmatrix}\sqrt{\frac{(A-q)(A+q)}{(A^2-1)(q^2+1)}} & \sqrt{\frac{(Aq-1)(Aq+1)}{(A^2-1)(q^2+1)}}\\ \sqrt{\frac{(Aq-1)(Aq+1)}{(A^2-1)(q^2+1)}} & \sqrt{\frac{(A-q)(A+q)}{(A^2-1)(q^2+1)}}\end{pmatrix}.$$ 
   At $A=\pm q$, $S_{0,1}=1$ and $S_{0,0}=0$, and similarly for $A=\pm\frac{1}{q}$, $S_{0,1}=0$ and $S_{0,0}=1$. We consider first the case where $A=\pm q$.
   
   If $A=\pm q$, then as $S_{0,0}=S_{1,1}$ and $S_{0,1}=S_{1,0}$, this means that $S=I$.
   
   If $A=\pm\frac{1}{q}$, $S$ becomes the anti-diagonal identity matrix for the same reasoning.
   
   By evaluating Equation (5) of \cite{pretzel}, we get that $$\overline{S}=\begin{pmatrix}\frac{A(q^2-1)}{(A^2-1)q} & \frac{A(q^2-1)\sqrt{\frac{(A-q)(A+q)(Aq-1)(Aq+1)}{A^2(q^2-1)^2}}}{(A^2-1)q}\\ \frac{A(q^2-1)\sqrt{\frac{(A-q)(A+q)(Aq-1)(Aq+1)}{A^2(q^2-1)^2}}}{(A^2-1)q} & \frac{A(q^2-1)}{(A^2-1)q}\end{pmatrix}.$$
   
   Clearly $\overline{S}_{0,1}=\overline{S}_{1,0}=0$ at each of the $4$ points. At $A=\pm q$, it is trivial to verify that the diagonal entries are $\pm 1$, and for $A=\pm\frac{1}{q}$, they are $\mp 1$.
   
   Because $\overline{T}=\begin{pmatrix}1 & 0\\0 & -A\end{pmatrix}=\begin{pmatrix}1 & 0\\0 & \mp\frac{1}{q}\end{pmatrix}$, for $A=\pm\frac{1}{q}$, we find that each of the terms of the form $\overline{S}\cdot\overline{T}^{m}\cdot S$ becomes exactly $\begin{pmatrix}0 & (\mp 1)^m\\\frac{1}{q^m} & 0\end{pmatrix}$. As $m$ is odd, this is exactly $\begin{pmatrix}0 & \mp 1\\\frac{1}{q^m} & 0\end{pmatrix}$. Because we are only looking at the elements $(1,x)$ of this matrix, we are left with only a contribution of $\mp 1$ and $0$ for $x=2,1$ respectively. Combined with our previously computed values for $S_{1,0}$ and $S_{0,0}$ gives a final value for the entire polynomial of $\mp 1$ completely regardless of $a,b,c$.
   
   For $A=\pm q$, by going through the same process with our already computed values, we get that the final polynomial at this point is exactly $\pm 1$ regardless of $a,b,c$.
   
   Regardless of what the constant value is, in all $4$ cases the $Q(c,r)$ are constant for all odd integers $c$, and so it is also equal to this value for both $c=2m+1$ and $c=2m-1$. Therefore, upon subtracting the two polynomials, we get $0$ as the $1^{\text{th}}$ difference. Recalling that we chose the four points specifically because $(A+q)(A-q)(Aq-1)(Aq+1)\mid Q^{1}(m,1)\text{ if and only if } Q^{1}(m,1)$ is zero at all of those points, we can conclude that in fact $(A+q)(A-q)(Aq-1)(Aq+1)\mid Q^{1}(m,1)$, as desired.
   \end{proof}

\begin{remark}
Both of $(A-q)(A+q)$ and $(Aq+1)(Aq-1)$ are quantum numbers up to a monomial factor. In particular, they are $\{\frac{A}{q}\}$ and $\{Aq\}$, respectively. In particular, each of the $\frac{Q(c,r)}{X(Q(c,r))}$ seems to be a product of quantum numbers in a simple way outlined in Conjecture~\ref{lemma:mono}, though proving this is for a future work.
\end{remark}

This proposition can be easily extended to all representations $[r]$, as outlined in Theorem~\ref{thm:main}. We first introduce two lemmas.

\begin{lemma}\label{lemma:s1}
The first row of the Racah matrix $S_{0,m} = \begin{cases}1&m=r\\0&m\neq r\end{cases}$ at $A=\pm q$ and $A=\pm \frac{1}{q}$, and the first row of $\bar{S}_{0,m} = \begin{cases}1&m=0\\0&m\neq 0\end{cases}$ at $A=\pm q$ and $A=\pm \frac{1}{q}$.
\end{lemma}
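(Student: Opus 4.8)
The plan is to evaluate the explicit formulas \eqref{eq:S} and \eqref{eq:sbar} directly at each of the four points, extending the $r=1$ computation of the preceding Proposition to arbitrary $r$. The first simplification is that setting $k=0$ collapses the summation: the bounds $\max(r+m,r+k)\le j\le\min(r+k+m,2r)$ force $j=r+m$, so a single term survives in both $S_{0m}$ and $\bar S_{0m}$. Substituting $j=r+m$, the factorials in $\sigma_{0m}(r+m)$ telescope to the clean value $\sqrt{[2m+1]}/[r+1]$, and using $\Delta_0=1$ together with the stated expression for $\chi_{[r+m,r-m]}$ I would reduce the two entries to
\[
S_{0m}=\frac{\sqrt{[2m+1]}}{[r+1]}\cdot\frac{\sqrt{D_{-1}\,G(r+m+1)\,G(r-m)}}{G(r+1)},\qquad \bar S_{0m}=\frac{[r]!}{\prod_{i=0}^{r-1}D_i}\,G(m)\sqrt{\frac{D_{2m-1}}{D_{-1}}}.
\]

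The heart of the argument is the vanishing behaviour of $G(n)=\prod_{j=1}^n \{Aq^{j-2}\}/\{q^j\}$, since each factor vanishes exactly when $Aq^{j-2}=\pm1$. At $A=\pm q$ only the $j=1$ factor vanishes, and it is precisely $D_{-1}$, so I would write $G(n)=D_{-1}\,\tilde G(n)$ with $\tilde G(n)=\prod_{j=2}^n \{Aq^{j-2}\}/\{q^j\}$ finite and nonzero; thus every $G(n)$ with $n\ge1$ carries a single simple zero. With the zero isolated, I would power-count its net order in each entry. For $S_{0m}$ the numerator contributes the zero from $D_{-1}$, from $G(r+m+1)$, and --- only when $m<r$ --- from $G(r-m)$, while the denominator $G(r+1)$ contributes one; the net order is strictly positive unless $m=r$ (where $G(r-m)=G(0)=1$ removes a zero), giving $S_{0m}\to0$ for $m\neq r$. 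At $m=r$ the orders balance and the limit is finite, and the telescoping evaluation $\tilde G(n)\big|_{A=q}=1/[n]$ yields $S_{0r}^2=\frac{[2r+1]}{[r+1]^2}\cdot\frac{\tilde G(2r+1)}{\tilde G(r+1)^2}=1$, so $S_{0r}=\pm1$ with the sign fixed by the branch of the square root. The companion computation for $\bar S_{0m}$, using $G(m)=D_{-1}\tilde G(m)$ and $\prod_{i=0}^{r-1}D_i\big|_{A=q}=[r]!$, sends $\bar S_{0m}\to0$ for $m\ge1$ and gives $\bar S_{00}=1$, establishing both delta patterns at $A=\pm q$.

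The two remaining points $A=\pm1/q$ are treated the same way, except that the distinguished vanishing factor is now the $j=3$ factor (proportional to $\{Aq\}$) rather than $D_{-1}$, so it only enters $G(n)$ for $n\ge3$; the same power-counting, now tracking this factor, is what must reproduce the claimed entries. I expect this last case to be the main obstacle. Because the relevant factor appears at a later index, several of the quantities $G(r-m)$, $G(r+1)$, and the prefactor $\prod_{i=0}^{r-1}D_i$ (which acquires a zero from $D_1=[0]$) degenerate simultaneously, so each entry must be interpreted as a genuine limit rather than a naive substitution, and the surviving constants and signs must be pinned down carefully to confirm that they collapse to the asserted standard-basis rows. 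Verifying that these competing zeros and poles balance to exactly $\pm1$, and do not leave a residual pole, is the delicate step; the $A=\pm q$ analysis above is the model, and I would either mimic it index-by-index or, if available, invoke a symmetry of the Racah matrices under $A\mapsto A^{-1}$ to transport the $A=\pm q$ result to $A=\pm1/q$.
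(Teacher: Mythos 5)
Your treatment of $A=\pm q$ is correct, and it is essentially the paper's own argument carried out more carefully. The paper also collapses the $k=0$ sum to the single term $j=r+m$ and then counts vanishing factors, but your version actually repairs two defects in the paper's writeup: the paper's reduced expression contains $D_{2m-1}$ where $D_{-1}$ belongs (with $D_{2m-1}$, which does \emph{not} vanish at $A=\pm q$, the vanishing count would fail), and its count of ``two factors in the numerator versus one in the denominator'' ignores that the numerator's zeros sit under a square root and so contribute half-orders; your bookkeeping with three vanishing factors $D_{-1}$, $G(r-m)$, $G(r+m+1)$ inside the radical, giving net order $\tfrac12>0$ for $m<r$ and order $0$ with limit $1$ at $m=r$, is the correct form of the argument. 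Your closed form $\sigma_{0m}(r+m)=\sqrt{[2m+1]}/[r+1]$ for all $m$ (the paper only simplifies it at $m=r$), and your explicit $\bar S$ computation at $A=\pm q$ (which the paper omits entirely, asserting it ``follows in precisely the same way''), are also correct.

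The genuine gap is the case $A=\pm 1/q$, which you explicitly defer --- and you should know that this gap cannot be closed for the statement as written, because the lemma is false there. The paper's own $r=1$ Proposition computes the first row of $S$ at $A=\pm\frac1q$ to be $S_{0,0}=1$, $S_{0,1}=0$: the unit entry sits at $m=0$, not at $m=r$ as Lemma~\ref{lemma:s1} claims. Your reduced formula shows exactly why: at $A=\pm\frac1q$ the vanishing factor of $G(n)$ is $\{Aq\}$ (the $j=3$ factor, present only for $n\ge3$) while $D_{-1}$ does not vanish at all, so the zeros land in different places and the delta pattern comes out reflected; moreover, as you noted, $\prod_{i=0}^{r-1}D_i$ acquires a zero through $D_1$, and for $r\ge2$ a naive evaluation of Equations~(\ref{eq:S}) and~(\ref{eq:sbar}) at these points produces genuine degenerations rather than the claimed entries. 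Consequently neither an index-by-index mimicry of the $A=\pm q$ computation nor an $A\mapsto A^{-1}$ symmetry can ``transport'' the result; what is true (and what Theorem~\ref{thm:main} actually uses, since it only needs $\sum_x S_{0,x}^2=1$ and $\bar S_{0,m}\propto\delta_{m,0}$) is that the first rows are standard basis vectors, with the $1$ in a different position at $A=\pm\frac1q$ than at $A=\pm q$. The paper's proof hides this behind its final one-sentence assertion; your proposal at least localizes the difficulty correctly, but as it stands it proves only half of the lemma, and the other half must be restated before it can be proved.
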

\begin{proof}
To prove this, we use Equation~\ref{eq:S}.

It is clear that $$A=\pm q\implies G(i)=\prod_{j=1}^n\frac{\{\pm q^{j-1}\}}{\{q^{j}\}}=\frac{\{\pm1\}}{\{q\}}\prod_{j=2}^n\frac{\{q^{j-1}\}}{\{q^{j}\}}=\frac{\pm1-(\pm1)^{-1}}{\{q\}}\prod_{j=2}^n\frac{\{q^{j-1}\}}{\{q^{j}\}}=0,\text{ for all } i>0.$$

We begin by considering $S$. As $\{\pm q^{0}\}=\{\pm 1\}=\pm1-(\pm1)^{-1}=0$, $D_{-1}=0.$ Additionally, $[2k+1]=[1]=\frac{\{q^{1}\}}{\{q\}}=1$.

At $A=\pm q$ in general, $D_{j}=[j+1]$, and $$G(n)=\prod_{j=1}^{n}\frac{[j-1]}{[j]}=\frac{[0]}{[n]}$$

Also, at $k=0$, \begin{multline*}S_{0,m}=\sigma_{0,m}(r+m)\sqrt{G(0)^2\frac{D_{2m-1}}{G(r+m+1)G(r-m)}}\frac{G(r-m)G(r+m+1)}{G(r+1)}\\=\sigma_{0,m}(r+m)\frac{\sqrt{D_{2m-1}G(r-m)G(r+m+1)}}{G(r+1)},\end{multline*}

by applying the above simplifications, without using $A=\pm q$.

$[x]$ is nonzero exactly when $x\neq 0$, so as

\begin{multline}\nonumber
\sigma_{0,m}(r+m) = (-1)^{r+m} \sqrt{[1][2m+1]} \cdot \frac{([m]!)^2[r]![r-m]!}{[r+1]![r+m+1]!} \cdot\frac{(-1)^{r+m}[r+m+1]!}{[r-m]!([m]!)^2}, \text{ for all } 0\leq k,m \leq r,
\end{multline}

and $r\geq m > 0$, $\sigma_{0,m}(r+m)$ is neither 0 and undefined for $0<m<r$, and so can be ignored within these ranges. We examine $m=0, m=r$ separately.

$G(i)$ always has exactly one factor that becomes $0$ at $A=q$ for $i\in \mathbb{Z}$ unless $i\leq 0$, and otherwise is exactly $1$, as nothing is being multiplied together. Additionally, this factor, $\{q^{0}\}$, is the same regardless of $i$. This means that at $r\neq m$, as $r,m\geq 0$, there are at least two factors in the numerator that become $0$ at $A=\pm q$, and only $1$ in the denominator, so the entire expression is zero at $m\neq r$.

At $m=r$, \begin{multline}\nonumber
\sigma_{0,m}(r+m) = \sigma_{0,r}(2r) = (-1)^{2r} \sqrt{[2r+1]} \cdot \frac{([r]!)^2[r]!}{[r+1]![2r+1]!} \cdot\frac{(-1)^{2r}[2r+1]!}{([r]!)^2} = \frac{\sqrt{[2r + 1]}}{[r+1]},\text{ for all } 0\leq k,m \leq r
\end{multline}

As the $[0]$s in the $G(i)$ cancel out at $r=m$ as $G(0)=1$, the remainder of the expression for $S_{k,m}$ simplifies to $\frac{[r+1]}{\sqrt{[2r+1]}}$, which exactly cancels out with $\sigma_{0,m}(r+m)$, giving that $S_{0,0}=1$. Therefore, $S_{0,m} = \begin{cases}1&m=r\\0&m\neq r\end{cases}$ at $A=\pm q$

The other cases (with one or both of $A=\pm \frac{1}{q}$ rather than $A=\pm q$ and $\bar{S}$ rather than $S$) follow in precisely the same way.

\end{proof}

\begin{theorem}\label{thm:main}
For all symmetric representations [r], $(A-q)(A+q)(Aq-1)(Aq+1)\mid Q^{1}(m,r),\text{ for all } a,b,m\in\mathbb{Z^{+}}.$
\end{theorem}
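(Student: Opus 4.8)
The plan is to mirror the proof of the preceding proposition for the representation $[1]$, replacing the explicit $2\times 2$ computation there with Lemma~\ref{lemma:s1}, which controls the first rows of $S$ and $\overline{S}$ at the four relevant points for arbitrary $r$. First I would reduce the divisibility to a vanishing statement. Since $(A-q)(A+q)(Aq-1)(Aq+1)$ is, up to a unit in $q$, a product of four linear factors in $A$ with distinct roots $A=\pm q$ and $A=\pm\tfrac1q$, and $Q^{1}(m,r)$ is a Laurent polynomial, the divisibility is equivalent to
$$Q(2m+1,r)-Q(2m-1,r)=0 \quad\text{at } A=\pm q \text{ and } A=\pm\tfrac1q.$$
Hence it suffices to show that at each of these four points $Q(c,r)$ is independent of the odd parameter $c$, and the same argument handles arbitrary $a,b$.

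The key structural observation is as follows. By Lemma~\ref{lemma:s1}, at each of the four points the first row of $\overline{S}$ is the coordinate vector supported at index $0$, i.e. $\overline{S}_{0,i}=\delta_{i,0}$, while the first row of $S$ is supported at $r$, i.e. $S_{0,x}=\delta_{x,r}$. Now $\overline{T}$ is diagonal with $\overline{T}_{00}=(-q^{-1}A)^{0}=1$, so index $0$ is a fixed point of $\overline{T}$ with eigenvalue $1$. Consequently the row vector $\overline{S}_{0,\bullet}$ satisfies $\overline{S}_{0,\bullet}\,\overline{T}^{\,c}=\overline{S}_{0,\bullet}$ for every $c$ at these points, so that
$$(\overline{S}\cdot\overline{T}^{\,c}\cdot S)_{0,x}=\sum_i \overline{S}_{0,i}\,(\overline{T}^{\,c})_{ii}\,S_{ix}=(\overline{S}\cdot S)_{0,x}=S_{0,x},$$
independently of $c$, and likewise for the $a$- and $b$-factors. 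Thus each of the three matrix factors in the definition of $Q(c,r)$ loses its dependence on its exponent, so $Q(c,r)$ is constant in $c$ at all four points and the first difference vanishes there.

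The main obstacle is that the prefactor $1/S_{0,x}$ is singular wherever $S_{0,x}=0$, i.e. for every $x\neq r$, so the evaluation above must be justified as a limit rather than a naive substitution. I would argue this by approaching each point through values of $A$ at which all $S_{0,x}\neq0$, where the defining formula is literally valid term by term, and then checking that every $x\neq r$ summand tends to $0$. For such $x$, each factor $(\overline{S}\cdot\overline{T}^{\,\bullet}\cdot S)_{0,x}$ equals $\overline{S}_{0,0}S_{0,x}$ plus corrections coming from indices $i\geq 1$, every one of which carries a factor $\overline{S}_{0,i}$ that vanishes at the point; in particular each factor itself vanishes at the point, so the product of the three numerator factors vanishes to strictly higher order than the single denominator $S_{0,x}$, forcing the term to $0$ in the limit. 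Making this order comparison precise is the crux, and it reuses exactly the factor-counting for $G(n)$ and $\sigma_{km}$ developed in the proof of Lemma~\ref{lemma:s1}, now applied to $\overline{S}_{0,i}$ and $S_{ix}$ for $i\geq1$ rather than to the first row alone. Once the $x\neq r$ terms are shown to vanish and the $x=r$ term is seen to contribute the $c$-independent value $\chi_{[r,0]}$, the difference $Q(2m+1,r)-Q(2m-1,r)$ vanishes at all four points, and the claimed divisibility follows.
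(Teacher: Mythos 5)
Your proposal follows exactly the paper's route: reduce the divisibility to the vanishing of $Q(2m+1,r)-Q(2m-1,r)$ at the four points $A=\pm q,\pm q^{-1}$, invoke Lemma~\ref{lemma:s1} together with $\bar{T}_{0,0}=1$ to conclude $(\overline{S}\cdot\overline{T}^{\,n}\cdot S)_{0,x}=S_{0,x}$ at those points, and hence that $Q(c,r)=\chi_{[r,0]}$ there independently of $c$, so the first difference vanishes. The one place you go beyond the paper is in explicitly treating the $x\neq r$ terms as a $0/0$ limit rather than cancelling $S_{0,x}^3/S_{0,x}$ formally (as the paper's proof silently does); your order-comparison sketch is correct and completable, since from the closed forms one checks that $S_{0,x}$ ($x\neq r$) and $\overline{S}_{0,i}$ ($i\geq 1$) each vanish to order $1/2$ while the $S_{i,x}$ remain bounded, so each numerator factor vanishes at least to the order of the denominator and the whole term tends to $0$.
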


\begin{proof}
   Much like the proof of the proposition, the theorem is equivalent to the statements that $$\begin{array}{c}Q(2m+1,r)-Q(2m-1,r)\mid_{A=q}=0, \\ Q(2m+1,r)-Q(2m-1,r)\mid_{A=-q}=0, \\ Q(2m+1,r)-Q(2m-1,r)\mid_{A=\frac{1}{q}}=0, \\ Q(2m+1,r)-Q(2m-1,r)\mid_{A=-\frac{1}{q}}=0\end{array}$$ from the definition of $Q^{1}(m,r)$. We know that $${\cal{H}}(a,b,c,r) = Q(c,r)=\chi_{[r,0]}\sum_{x=0}^{r}\frac{1}{S_{0,x}}(\overline{S}\cdot\overline{T}^{a}\cdot S)_{0,x}(\overline{S}\cdot\overline{T}^{b}\cdot S)_{0,x}(\overline{S}\cdot\overline{T}^{c}\cdot S)_{0,x}.$$
   
   We now recall Lemma~\ref{lemma:mono}. In particular, the formula only involves the first row of $\bar{S}\cdot \bar{T}^n S$, so we prove a simple form for this. By formula~\ref{eq:T}, $\bar{T}^n$ is a diagonal matrix with $\bar{T}_{0,0}=1$. We consider each of $A=\pm q$, $A=\pm \frac{1}{q}$ together using the lemma. Therefore, $\bar{S}\cdot \bar{T}^n$ has a first row equal to $(\bar{S}\cdot \bar{T}^n)_{0,m}=\begin{cases}\bar{S}_{0,0}\bar{T}^n_{0,0}&m=0\\0&m\neq 0\end{cases}=\begin{cases}1&m=0\\0&m\neq 0\end{cases}$, from the lemma. Then, again by the lemma, $((\bar{S}\cdot \bar{T}^n)\cdot S)_{0,m}=S_{0,m}$.
   
   Therefore, \begin{equation}\label{eq:last}Q(c,r)=\chi_{[r,0]}\sum_{x=0}^{r}\frac{S_{0,x}^3}{S_{0,x}}=\chi_{[r,0]}\sum_{x=0}^{r}S_{0,x}^2=\chi_{[r,0]},\end{equation}

    which is constant. Therefore, $Q^{1}(m,r)=Q(2m+1,r)-Q(2m-1,r)=0, \text{ for all } m\in\mathbb{Z^{+}}$, regardless of $a,b$, as expected.
\end{proof}

\begin{remark}
This proof, with minimal modification, also works for any genus $g$ pretzel knot, because regardless of the power of $S_{0,x}$ in~\ref{eq:last}, so long as it is at least $1$ (which it is because it is one more than the genus), it is still a constant for constant genus, and so the first differences are always zero at $A=\pm q, \pm \frac{1}{q}$. In particular, for higher genus, we define differences such that all but the last parameter are constant.
\end{remark}

\subsection{Conjectured General Properties}

In this section, we present some conjectured properties. Each property has been verified up to $r=[5]$, $c=2$, $a=b=1$, unless otherwise specified. Some are also verified for higher $c$ for lower $r$.

\begin{conjecture}\label{conj:main}
$Q^{r}(c,r) \cdot A^{r} q^{2(r)(r-1)} = Q^{r}(c+2,r)$ for all odd integers $a,b,c$ and symmetric representations $[r]$.
\end{conjecture}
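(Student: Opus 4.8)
The plan is to exploit the fact that the parameter $c$ enters the HOMFLY polynomial only through the diagonal matrix $\bar T^{c}$, so that $Q(c,r)$ is a finite sum of geometric sequences in $c$. Since $\bar T$ is diagonal with entries $\lambda_m = \bar T_{mm} = (-q^{m-1}A)^m$ by \ref{eq:T}, expanding the single $c$-dependent factor gives $(\bar S\,\bar T^{c}\,S)_{0,x} = \sum_{m=0}^{r}\bar S_{0,m}\,\lambda_m^{\,c}\,S_{m,x}$. Collecting the two $c$-independent factors $(\bar S\,\bar T^{a}S)_{0,x}$ and $(\bar S\,\bar T^{b}S)_{0,x}$ together with the prefactor $\chi_{[r,0]}/S_{0,x}$, and summing over $x$, one obtains
\begin{equation}
Q(c,r) = \sum_{m=0}^{r} K_m\,\lambda_m^{\,c}, \qquad \lambda_m = (-1)^m A^{m} q^{m(m-1)},
\end{equation}
where each $K_m$ is independent of $c$ (it depends only on $a,b,r$ and on the first rows and columns of $S,\bar S$). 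First I would make this decomposition rigorous and record that the constant channel $\lambda_0=1$ is present and that the top channel has $\lambda_r = (-1)^r A^{r} q^{r(r-1)}$.

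Next I would observe that the finite-difference operator acts diagonally in this basis: since $\lambda_m^{\,c+2}-\lambda_m^{\,c} = (\lambda_m^{2}-1)\lambda_m^{\,c}$, iterating gives $Q^{n}(c,r) = \sum_{m=0}^{r} K_m(\lambda_m^{2}-1)^{n}\lambda_m^{\,c}$, and in particular the $m=0$ term drops out after the first difference. In this language the conjecture asserts that the $r$-th difference obeys a one-term geometric recursion with ratio $A^{r}q^{2r(r-1)}$. Writing $Q^{r}(c+2,r) - A^{r}q^{2r(r-1)}Q^{r}(c,r) = \sum_{m=1}^{r}K_m(\lambda_m^{2}-1)^{r}(\lambda_m^{2}-A^{r}q^{2r(r-1)})\lambda_m^{\,c}$ and using that the functions $\lambda_m^{\,c}$ are linearly independent over odd $c$ (the $\lambda_m^{2}$ are pairwise distinct monomials), the conjecture is equivalent to the vanishing of every coefficient $K_m(\lambda_m^{2}-1)^{r}(\lambda_m^{2}-A^{r}q^{2r(r-1)})$ for $1\le m\le r$. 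The top channel $m=r$ is disposed of by the factor $\lambda_r^{2}-A^{r}q^{2r(r-1)}$ once the framing normalization of the convention is accounted for: the raw eigenvalue gives $\lambda_r^{2}=A^{2r}q^{2r(r-1)}$, and I expect the full twist $c\to c+2$ to contribute a normalization factor $A^{-r}$, making the effective squared ratio exactly $A^{r}q^{2r(r-1)}$. I would pin this monomial down by calibrating against $r=1$, where only the two channels $\lambda_0=1$ and $\lambda_1=-A$ occur, the raw ratio is $\lambda_1^{2}=A^{2}$, and matching the conjectured ratio $A$ fixes the per-twist factor as $A^{-1}$, consistent with $A^{-r}$.

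The remaining, and main, obstacle is the intermediate channels $1\le m\le r-1$. The plain $r$-th difference only multiplies channel $m$ by $(\lambda_m^{2}-1)^{r}\neq 0$, so it cannot annihilate them; the conjecture therefore forces the stronger structural statement that the coefficients $K_m$ themselves vanish, i.e. that at fixed $r$ the polynomial $Q(c,r)$ is supported only on the extreme channels $m=0$ and $m=r$. I would attack this exactly as in Theorem~\ref{thm:main}: Lemma~\ref{lemma:s1} pins the first rows of $S$ and $\bar S$ at the special points $A=\pm q,\pm\tfrac1q$, and I would extend that evaluation to the full first rows of $\bar S\,\bar T^{a}S$ and $\bar S\,\bar T^{b}S$ in order to read off each $K_m$ and show it is divisible by enough hook-type quantum-number factors $\{Aq^{j}\}$ to force $K_m\equiv 0$ for $0<m<r$.

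I expect this channel-vanishing to be the crux, since it is where the matching of the difference order to the representation rank ($n=r$) must enter, and it is the step I am least certain survives for arbitrary $a,b$. If the intermediate $K_m$ do not vanish identically, the realistic fallback is to prove only the divisibility $(\,\text{ratio}-A^{r}q^{2r(r-1)}\,)\mid (\,Q^{r}(c+2,r)-A^{r}q^{2r(r-1)}Q^{r}(c,r)\,)$ at the four special points, along the lines already used for the first differences in Theorem~\ref{thm:main}, and to regard the full equality as established only on the verified range $r\le 5$.
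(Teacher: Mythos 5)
You should know at the outset that the paper contains no proof of this statement to compare against: it is Conjecture~\ref{conj:main}, supported only by numerical checks (up to $r=5$, $a=b=1$, small $c$), so your argument has to stand entirely on its own. Its first step does stand: the decomposition $Q(c,r)=\sum_{m=0}^{r}K_m\lambda_m^{c}$ with $\lambda_m=\bar T_{mm}=(-q^{m-1}A)^{m}$ and $K_m$ independent of $c$ is rigorous, and it correctly diagonalizes the differencing, $Q^{n}(c,r)=\sum_{m}K_m(\lambda_m^{2}-1)^{n}\lambda_m^{c}$. Since the $\lambda_m^{2}=A^{2m}q^{2m(m-1)}$ are pairwise distinct monomials, the functions $c\mapsto\lambda_m^{c}$ on odd $c$ are linearly independent over $\mathbb{Q}(A,q)$, so (as you say) the conjecture is equivalent to: for each $1\le m\le r$, either $K_m=0$ or $\lambda_m^{2}$ equals the conjectured ratio. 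This reduces the conjecture to your claims (a) $K_m=0$ for $1\le m\le r-1$ and (b) the top ratio matches. You prove neither, and both fail. For (b): under the paper's stated conventions the top ratio is $\lambda_r^{2}=A^{2r}q^{2r(r-1)}$, not $A^{r}q^{2r(r-1)}$; the ``framing normalization factor $A^{-r}$'' you invoke to absorb the discrepancy appears nowhere in the paper --- the Definition of $Q(c,r)$ and Equation~\ref{eq:T} contain no writhe-dependent correction, and you cannot assume such a factor into existence. The case $r=1$ makes this decisive: the only channels are $\lambda_0=1$ and $\lambda_1=-A$, so $Q^{1}(c+2,1)=A^{2}\,Q^{1}(c,1)$ exactly, and no calibration can turn $A^{2}$ into the conjectured $A$ while $Q^{1}\not\equiv 0$. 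So under the conventions as written, the identity you set out to prove is off by $A^{r}$; the honest conclusion is that either Equation~\ref{eq:T} or the conjectured ratio must be amended before a proof can even be attempted.

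The gap in (a) is worse, because intermediate-channel vanishing is not a hidden fact awaiting a Lemma~\ref{lemma:s1}-style evaluation --- it is almost certainly false. Explicitly, $K_m=\chi_{[r,0]}\,\bar S_{0,m}\sum_{x}(\bar S\bar T^{a}S)_{0,x}(\bar S\bar T^{b}S)_{0,x}\,S_{m,x}/S_{0,x}$, and the conjecture quantifies over \emph{all} odd $a,b$; demanding $K_m=0$ for all such $a,b$ forces the $m$-th row of $S$ to be orthogonal to the whole family of vectors $\bigl((\bar S\bar T^{a}S)_{0,x}(\bar S\bar T^{b}S)_{0,x}/S_{0,x}\bigr)_{x}$, which there is no reason to expect short of $S_{m,x}\equiv 0$. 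Structurally, if (a) held then $Q(c,r)$ would be supported on channels $0$ and $r$ alone, so already the \emph{first} difference would be geometric, $Q^{1}(c+2,r)=\lambda_r^{2}Q^{1}(c,r)$, and every higher difference would be geometric with the same ratio --- making the paper's entire hierarchy of $r$ successive differences (Figure~\ref{fig:main_conj}) redundant and contradicting the fact that the paper finds the geometric behavior only at level $n=r$. What your framework genuinely establishes is therefore the opposite of a proof: the conjecture cannot hold verbatim, as an exact identity for all odd $a,b,c$, under the paper's stated conventions; any correct version must at least replace the ratio by $A^{2r}q^{2r(r-1)}$ and must deal with the nonvanishing intermediate channels (for instance as a statement about the leading factors $Q^{(r)}$, or about special evaluations). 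Your fallback --- divisibility at $A=\pm q,\pm q^{-1}$ in the style of Theorem~\ref{thm:main} --- is sound but establishes something far weaker than the statement in question.
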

\begin{remark}
This conjecture, in addition to Lemma~\ref{lemma:permute}, would allow for computing infinitely many HOMFLY polynomials with only a finite number computed using the Racah matrices, which would result in drastic time savings. In particular, as shown in Figure ~\ref{fig:main_conj} for $r=[2]$, only $r+1$ HOMFLY polynomials would need to be computed using Racah matrices to be able to compute recursively any HOMFLY polynomial with the same $a,b$.
\end{remark}

\begin{figure}[H]
    \centering
    \begin{tikzpicture}[scale=1.1]
\node[circle, draw, scale=0.5, color=red]() at (0,0) {${\cal{H}}(1,1,1,2)$};
\node[circle, draw, scale=0.5, color=red]() at (1.75,0) {${\cal{H}}(1,1,3,2)$};
\node[circle, draw, scale=0.5, color=red]() at (3.5,0) {${\cal{H}}(1,1,5,2)$};
\draw[->] (0.5,0.5) -- (0.725,0.8);
\draw[->] (1.25,0.5) -- (1.025,0.8);
\draw[->] (2.25,0.5) -- (2.475,0.8);
\draw[->] (3,0.5) -- (2.775,0.8);
\node[circle, draw, scale=0.75, color=red]() at (0.875,1.5) {$Q^{1}(1,2)$};
\node[circle, draw, scale=0.75, color=red]() at (2.625,1.5) {$Q^{1}(3,2)$};
\draw[->] (1.375,2) -- (1.6,2.3);
\draw[->] (2.125,2) -- (1.9,2.3);
\node[circle, draw, scale=0.75, color=red]() at (1.75,3) {$Q^{2}(1,2)$};
\node[circle, draw, scale=0.75]() at (3.5,3) {$Q^{2}(3,2)$};
\draw[->] (3.125,2) -- (3.35,2.3);
\draw[->] (3.875,2) -- (3.65,2.3);
\node[circle, draw, scale=0.75]() at (4.375,1.5) {$Q^{1}(5,2)$};
\draw[->] (4,0.5) -- (4.225,0.8);
\draw[->] (4.75,0.5) -- (4.525,0.8);
\node[circle, draw, scale=0.5]() at (5.25,0) {${\cal{H}}(1,1,7,2)$};
\end{tikzpicture}
    \caption{Assuming that the red values are known, we can compute all of the black values from top to bottom with very few computations using Conjecture~\ref{conj:main}}
    \label{fig:main_conj}
\end{figure}
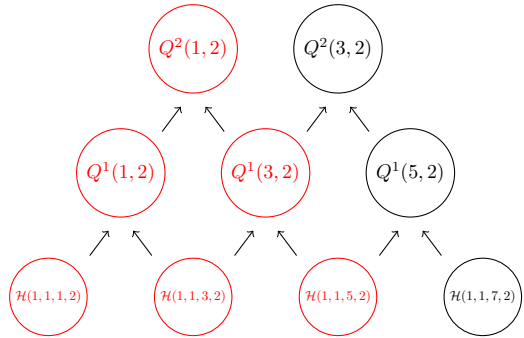

\begin{remark}
This would also imply that $Q^{r}(a,b,c,r) \mid Q^{r}(a+2,b,c,r)$ and $Q^{r}(a,b,c,r) \mid Q^{r}(a,b+2,c,r)$, for all representations $[r]$ and $a,b,c$ odd integers. This would be very powerful as it would allow one to generate the $r^{\text{th}}$ difference for every odd integer $a,b,c$, from only $r+1$ Racah matrix computations, relatively computationally inexpensively.
\end{remark}





\subsection*{Acknowledgements}

WQ is grateful to Yakov Kononov for advising him during this project.

WQ is grateful to the MIT PRIMES program for facilitating this research.

\subsection*{Data Availability Statement}

Python code and more explicitly computed values have been deposited at \\\url{https://github.com/MathTauAthogen/KnotTheory}.

\subsection*{Supporting Files}

In the arXiv submission of this paper, we include as ancillary files some explicitly computed values of differences.

\bibliographystyle{unsrt}
\nocite{*}
\bibliography{bib}

\end{document}